\def\inv{^{-1}}
\def\Z{{\mathbb Z}}
\newcommand{\oeis}[1]{\text{\href{https://oeis.org/#1}{{\small \tt #1}}}}
\newtheorem{thm}{Theorem}
\def\red{\color{red}}
\newtheorem{cor}[thm]{Corollary}
\newtheorem{defi}[thm]{Definition}
\begin{document}

\title{The vectorial kernel method for walks with longer steps}
\author{Valerie Roitner\thanks{TU Wien, Institute for discrete mathematics and geometry. ORCiD of the author: \href{https://orcid.org/0000-0002-2621-431X}{0000-0002-2621-431X}}}

\maketitle

\begin{abstract}
Asinowski, Bacher, Banderier and Gittenberger \cite{ABBG-vkm} recently developed the vectorial kernel method -- a powerful extension of the classical kernel method that can be used for paths that obey constraints that can be described by finite automata, e.g. avoid a fixed pattern, avoid several patterns at once, stay in a horizontal strip and many others more. However, they only considered walks with steps of length one. In this paper we will generalize their results to walks with longer steps. We will also give some applications of this extension and prove a conjecture about the asymptotic behavior of the expected number of ascents in Schröder paths.\\
\\
2010 Mathematics subject classification: 05A15, 05A16, 05C81\\
\\
Key words and phrases: lattice path, Schröder path, generating functions, kernel method, asymptotic behavior
\end{abstract}

\section{Introduction}

Lattice path structures appear often in mathematical models in natural sciences or computer science, for example in analysis of algorithms (see e.g. \cite{Knuth3, ADHKP}) or physics when modeling wetting and melting processes \cite{Fisher} or Brownian motion \cite{Mar-brownian}. Another field is bioinformatics (\cite{MTM-dna, RG-dna, JQR}), where the lattice paths are usually constrained to avoid certain patterns.

The generating function of these objects can often be described by a functional equation that can be solved by the kernel method.

In its easiest form, i.e., for solving equations of the type
$$K(z,u)F(z,u)=A(z,u)+B(z,u)G(z),$$
where $K,A$ and $B$ are known functions and $F$ and $G$ are unknown functions, where $K(z,u)=0$ has only one small root (i.e. a root $u_i(z)$ with $u_i(z)\sim 0$ as $z\sim 0$), the kernel method has been folklore in combinatorics and related fields like probability theory. One identifiable source is Knuth's book \cite{Knuth} from 1968, where he used this idea as a new method for solving the ballot problem. Ever since there have been several extensions and applications of this method, see for example \cite{BW, BMJ, BMM}, one of the most recent being the \em{}vectorial kernel method\em{} by Asinowski, Bacher, Banderier and Gittenberger \cite{ABBG-vkm}. It allows to solve enumeration problems for lattice paths obeying constraints that can be described by a finite automaton. Furthermore, it also allows enumeration of the occurrence of any phenomenon that can be described by a finite automaton, e.g. the number of occurrences of a given pattern.

In their paper they only considered directed walks with steps where the first coordinate is 1. However, this method can be generalized to directed walks with longer steps which will be done in this paper. The proofs used here follow the same methods as in the case with steps of length one, but some adaptions have to be made.

In the final two chapters we will have a look at some applications of this method. Firstly, we will re-derive the number of Schröder paths (excursions with steps $U=(1,1), D=(1,-1)$ and $F=(2,0)$) avoiding the pattern $UF$, which has been studied in \cite{Yan}. Furthermore, we will derive the trivariate generating function for the number of ascents (i.e. number of sequences of nonempty consecutive up-steps) in Schröder paths and prove that the expected number of ascents in Schröder paths of length $2n$ indeed behaves asymptotically like $(\sqrt{2}-1)n$ as Callan conjectured on the OEIS \cite{OEIS}, entry \oeis{A090981}.

\section{Definitions and notations}

A \em{}lattice path\em{} in $\mathbb{Z}^2$ is a finite sequence or finite word $w=[\nu_1,\dots,\nu_m]$ such that all vectors $\nu_i$ lie in the \em{}step set\em{} $\mathcal{S}$, which is a finite subset of $\mathbb{Z}^2.$ A lattice path can be visualized as a polygonal line in the plane, which is created by starting at the origin and successively appending the vectors $\nu_i=(u_i,v_i)$ at the end. The vectors $\nu_i$ are called \em{}steps\em{}. In this paper we will only consider \em{}directed\em{} lattice paths where all steps $(u_i,v_i)$ have a positive first entry.

\begin{figure}[h]
\begin{center}
\includegraphics[height=3cm]{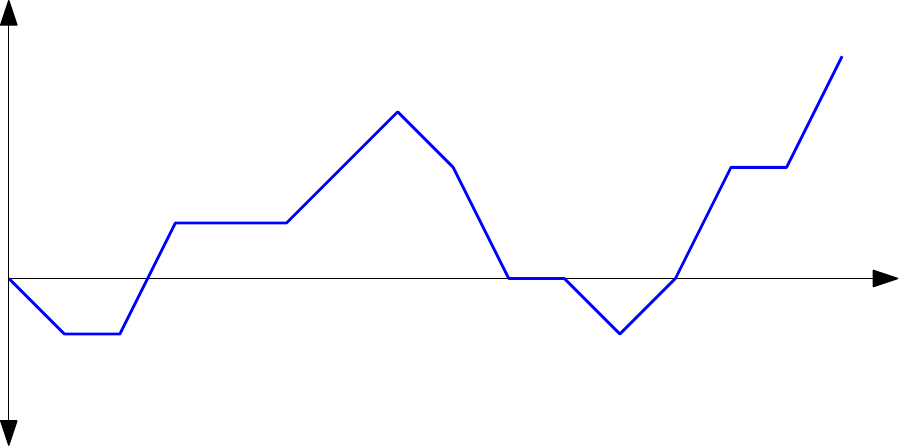}
\caption{A lattice path} \label{ex-lp}
\end{center}
\end{figure}

The first entry $u_i$ of a step is called its \em{}length\em{} and the length of a walk $w$, denoted by $|w|$ is the sum of the length of all its steps, i.e. $|w|=u_1+\dots+u_m$. This does not always coincide with the number of steps, only if $u_i=1$ for all steps in $\mathcal{S}$ this is the case. The \em{}final altitude\em{} of a walk $w$, denoted by $\text{alt}(w)$ is the sum of the altitudes of all steps, i.e. $\text{alt}(w)=v_1+\dots+v_m$. Thus, a walk starting in $(0,0)$ terminates in $(|w|,\text{alt}(w))$.

The \em{}step polynomial\em{} $P(t,u)$ of the step set $\mathcal{S}$ is given by
\begin{equation}
\label{steppoly}
P(t,u)=\sum_{s\in\mathcal{S}} t^{|s|}u^{\text{alt}(s)}.
\end{equation}
The variable $t$ encodes length, the variable $u$ encodes altitude. When all steps have length one, we can omit the dependency on $t$ and write
$$P(u)=\sum_{s\in\mathcal{S}}u^{\text{alt}(s)}.$$
Denote $-c$ the smallest (negative) power of $u$ in $P(t,u)$ and $d$ the largest (positive) power of $u$.  If $\mathcal{S}$ only contains negative or only positive altitudes of steps, the following results still hold, but the corresponding models are easy to solve and lead to rational generating functions.

Often, there are constraints imposed on the lattice paths one wants to consider, e.g., the path is not allowed to leave a certain region or has to end at a certain altitude, usually at altitude zero. This leads to the following

\begin{defi} For lattice paths obeying constraints we define:
\begin{itemize}
\item
A \em{}walk\em{} is an unconstrained lattice path.
\item
A \em{}bridge\em{} is a lattice path whose endpoint lies on the $x$-axis.
\item
A \em{}meander\em{} is a lattice path that lies in the quarter-plane $\Z_{\geq0}\times\Z_{\geq0}$. Since we only consider lattice paths with steps with positive $x$-coordinates, this is equivalent to lattice paths that never attain negative altitude.
\item
An \em{}excursion\em{} is a lattice path that is both a bridge and a meander, i.e., a lattice path that ends on the $x$-axis, but never crosses the $x$-axis.
\end{itemize}
\end{defi}

Banderier and Flajolet \cite{BF} computed generating functions for all these classes of lattice paths. Their results can be summarized by the table in figure \ref{table-BF}:\\
\begin{figure}[ht]
\begin{center}
\includegraphics[height=8cm]{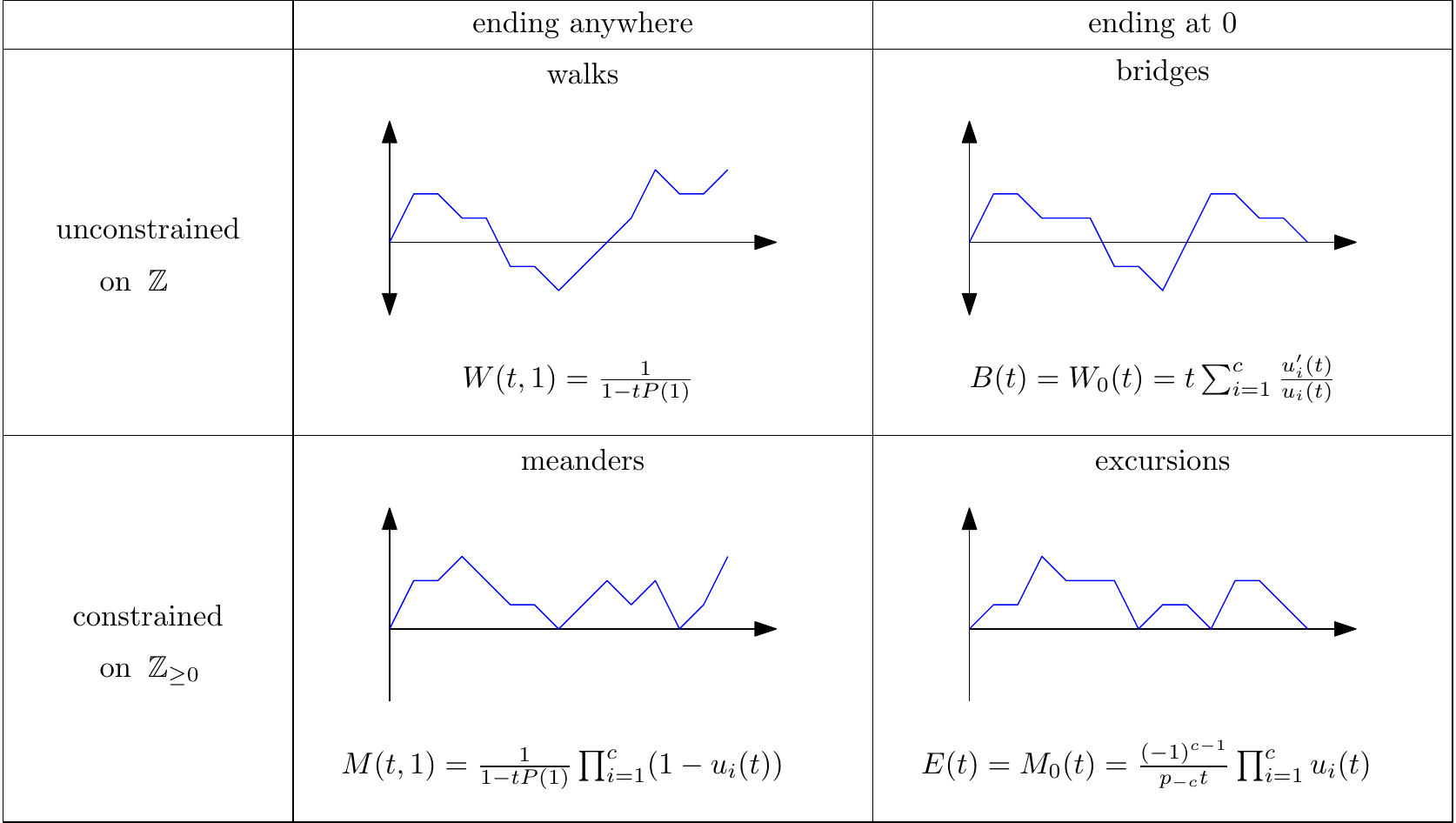}
\caption{The generating functions for walks, bridges, meanders and excursions (in the case of steps of length one). Here, $P(u)$ is the step polynomial, $c$ is the number of small roots, which are given by $u_1,\dots, u_c$. Here, $W_0(t)$ stands for $[u^0]W(t,u)=W(t,0)$ (and analogously for $M_0$).} \label{table-BF}.
\end{center}
\end{figure}
\\
This study was generalized in \cite{ABBG-vkm} to paths with steps of length one that avoid one single pattern. In this paper we will show similar results for walks with longer steps.\\
\\
A \em{}pattern\em{} is a fixed path
$$p=[a_1,\dots,a_{\ell}]$$
where $a_i\in \mathcal{S}$. The length of a pattern is the sum of the lengths of its steps. An  \em{}occurrence\em{}  of a pattern $p$ in a lattice path $w$ is a contiguous sub-string of $w$, which coincides with $p$. We say a lattice path $w$ \em{}avoids\em{} the pattern $p$ if there is no occurrence of $p$ in $w$. For example, the path $w=[(1,1),(3,0),(3,0),(1,1), (1,-2),(3,0),(1,1)]$ has two occurrences of the pattern $[(3,0),(1,1)]$ but avoids the pattern $[(1,-2),(1,-2)]$.

A \em{}prefix\em{} of length $k$ of a string is a contiguous non-empty sub-string that matches the first $k$ letters (or steps, to phrase it with words more familiar for a lattice path setting). Similarly, a \em{}suffix\em{} of length $k$ of a string is a contiguous non-empty sub-string that matches the last $k$ letters. For example, $[(1,1),(3,0),(3,0)]$ is a prefix (of length 3) of the path from the previous example and $[(1,-2),(3,0),(1,1)]$ is a suffix. A \em{}presuffix\em{} of a pattern is a non-empty string that is both prefix and suffix. In our above example, $[(1,1)]$ is the only presuffix of this given path.

Some authors use a different definition of a pattern, namely when the pattern is contained in the path as non-contiguous substring, see for example \cite{BBLGPW}. The path $w$ as defined in the previous example contains $[(1,1),(3,0),(1,1),(3,0)]$ in the non-contiguous-sense, but not in the contiguous sense. Lattice paths avoiding patterns in the non-contiguous sense also can be dealt with the vectorial kernel method. In this paper we will only consider consecutive patterns.

\vspace{0.5cm}
\noindent
In order to describe pattern avoidance we will need the concept of finite automata.

\begin{defi}
A \em{}finite automaton\em{} is a quadruple $(\Sigma, \mathcal{M}, s_0, \delta)$ where
\begin{itemize}
\item
$\Sigma$ is the input alphabet (in our case, $\Sigma$ will usually be the step set)
\item
$\mathcal{M}$ is a finite, nonempty set of states
\item
$s_0\in \mathcal{M}$ is the initial state
\item
$\delta: \mathcal{M}\times\Sigma\to \mathcal{M}$ is the state transition function. In many cases, it is useful to allow $\delta$ to be a partial function as well, i.e., not every input $\delta(S_i,x)$ has to be defined. Especially for pattern avoidance the usage of partial functions is very helpful.
\end{itemize}
Sometimes there is also a set $F\subseteq\mathcal{M}$ of final states given in the definition of a finite automaton. Here, however, we will not have any final states (i.e. $F=\emptyset$).
\end{defi}

A finite automaton can be described as a weighted directed graph (the states being the vertices, the edges and their weights given by the transition function) or by an adjacency matrix $A$, where the entry $A_{ij}$ consists of the sum of all letters $x$ that, when being in state $S_i$ and reading the letter $x$, transition to state $S_j$. Phrased differently, 
$${A_{ij}=\sum_{x:\delta(S_i,x)=S_j} x}.$$

\vspace{0.5cm}
\noindent
\textbf{Example:} Let $\mathcal{S}=\{U,F,D\}$ where $U=(1,1), F=(2,0)$ and $D=(1,-1)$ be the step set and $p=[U,F,U,D]$ the forbidden pattern. We will build an automaton with $s=4$ states, where $s$ is the number of steps in the pattern. Each state corresponds to a proper prefix of $p$ collected so far by walking along the lattice path. Let us label these states $X_0,\dots, X_{s-1}$ (in our case $X_0,\dots, X_3$). The first state $X_0$ is labeled by the empty word. The next states are labeled by proper prefixes of $p$, more precisely $X_i$ is labeled by $X_i=[a_1,\dots,a_i]$ where $a_j$ are the letters of the forbidden pattern. For $i,j\in\{1,\dots,s\}$ we have $\delta(X_i,\lambda)=X_j$ (or, in the graph setting, an arrow labeled $\lambda$) if $j$ is the maximal number such that $X_j$ is a suffix of $X_i\lambda$.

When the automaton reads a path $w$, it ends in the state labeled with the longest prefix of $p$ that coincides with a suffix of $w$. The automaton is completely determined by the step set and the pattern.
\begin{figure}[h]
\begin{center}
\includegraphics[height=3.5cm]{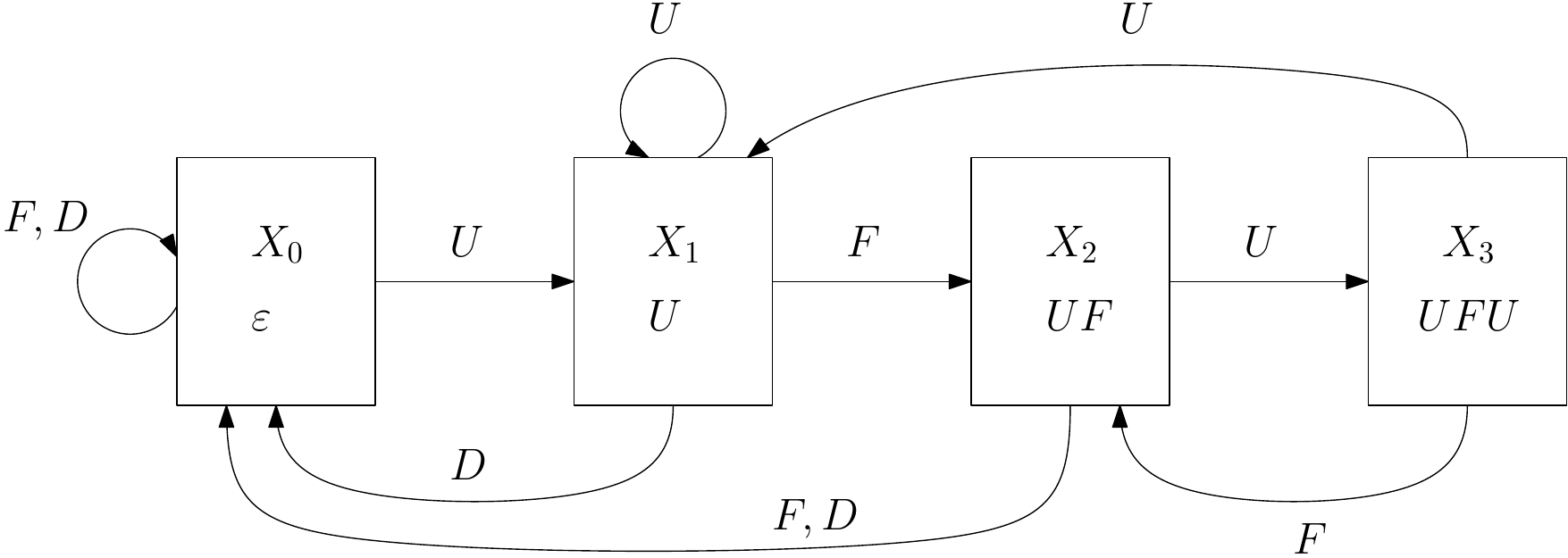}
\caption{The automaton for $\mathcal{S}=\{U,F,D\}$ and $p=[U,F,U,D]$} \label{ex-UFUD}
\end{center}
\end{figure}

When looking at the adjacency matrix of this automaton we also have to keep track of the length of the steps. We obtain
$$A=\begin{pmatrix}
t^2+tu^{-1} & tu & 0 & 0\\
tu\inv & tu & t^2 & 0\\
t^2+tu\inv & 0 & 0 & tu\\
0& tu & t^2 & 0
\end{pmatrix}.$$
In each row except the last one, all entries sum up to $P(t,u)$, because at each state except the last one, all possible steps are allowed. The entries in the last row of the matrix sum up to $P(u)-w_s$, where $w_s$ is the weight of the last step in the forbidden pattern $p$. This is because in the last state $X_{s-1}$ all steps except the one that would make $p$ complete.

\vspace{0.5cm}
\noindent
Automata can not only be used to describe the avoidance of one pattern, but also for other constraints, e.g. the avoidance of several patterns at once (see \cite{ABR}) or height constraints. Or to describe the avoidance of patterns in the non-contiguous sense.

\begin{defi}
The \em{}kernel\em{} of an automaton is defined to be the determinant of $I-A(t,u)$, where $A$ is the adjacency matrix of the automaton, i.e.,
$$K(t,u):=\det(I-A(t,u)).$$
\end{defi}

For certain kinds of automata, for example the automata that arise when considering walks that avoid a pattern, there are easier expressions for the kernel that avoid the computation of the adjacency matrix and its determinant. For more details on this, see \cite{ABBG-vkm}.

\section{The vectorial kernel method for walks with longer steps}

The vectorial kernel method indeed works for walks with longer steps if the right adaptions are made. Instead of the adjacency matrix $A=A(u)$ we now have to consider the adjacency matrix $A(t,u)$ that takes into account the different lengths of the steps by weighting them with the corresponding powers of $t$, i.e. a step of length $i$ is weighted with $t^{i}$. With these adapted adjacency matrix we obtain the following theorems:

\begin{thm}
\label{walks-longsteps}
The bivariate generating function for walks obeying constraints that can be described by a finite automaton (e.g. pattern avoidance, height restrictions, etc.) is given by
\begin{equation}
\label{gf-walks-longsteps}
W(t,u)=\frac{(1,0,\dots,0)\mathrm{adj}(I-A(t,u))\vec{\mathbf{1}}}{\det(I-A(t,u))}
\end{equation}
where $t$ encodes length and $u$ encodes final altitude.
\end{thm}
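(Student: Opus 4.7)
The plan is to follow the standard transfer-matrix / last-step decomposition, adapting only the bookkeeping so that steps may carry different powers of $t$. For each state $S_i$ of the automaton, let $W_i(t,u)$ denote the generating function for admissible walks that start in the initial state $S_0$ and end in $S_i$, with $t$ marking length and $u$ marking final altitude; arrange these into a row vector $\vec{W}(t,u) = (W_0,W_1,\dots,W_{m-1})$ where $m=|\mathcal{M}|$. The argument then consists of setting up a linear system for $\vec{W}$, solving it, and collapsing the result to the scalar $W(t,u) = \sum_i W_i(t,u) = \vec{W}(t,u)\cdot\vec{\mathbf{1}}$.

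For the linear system I would use the obvious last-step decomposition: an admissible walk ending in $S_j$ is either the empty walk (contributing $1$ exactly when $j=0$) or an admissible walk ending in some state $S_i$ followed by a step that the automaton reads while in $S_i$ and which transitions it to $S_j$. Summing the contributions of all such final steps recovers precisely the entry $A_{ij}(t,u)$ of the adjacency matrix, so
$$\vec{W}(t,u) = (1,0,\dots,0) + \vec{W}(t,u)\,A(t,u),$$
equivalently $\vec{W}(I-A) = (1,0,\dots,0)$. Solving for $\vec{W}$ and right-multiplying by $\vec{\mathbf{1}}$ yields $W(t,u) = (1,0,\dots,0)\,(I-A(t,u))^{-1}\vec{\mathbf{1}}$, and writing $(I-A)^{-1} = \mathrm{adj}(I-A)/\det(I-A)$ gives exactly (\ref{gf-walks-longsteps}).

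The only point that genuinely requires attention in the longer-step setting is the invertibility of $I - A(t,u)$ in the appropriate ring of formal series. In the length-one case, every entry of $A$ carries the factor $t$, so $(I-A)^{-1} = \sum_{n\geq 0} A^n$ converges trivially in the $t$-adic topology and the manipulations are unambiguous. For longer steps different entries carry different powers of $t$, but the assumption that every step $s\in\mathcal{S}$ satisfies $|s|\geq 1$ still guarantees that $A(t,u)$ has no constant term in $t$; hence the geometric series $\sum_{n\geq 0}A^n$ converges in $\mathbb{Q}[u,u^{-1}][[t]]$, and the formal inversion above remains legitimate. I expect this verification of the ambient ring to be the only non-routine point; the rest is a direct transcription of the length-one argument from \cite{ABBG-vkm}.
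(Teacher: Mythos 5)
Your proposal is correct and follows essentially the same route as the paper: the last-step decomposition yielding $\vec{W}=(1,0,\dots,0)+\vec{W}A(t,u)$, inversion of $I-A$ via the adjugate, and contraction with $\vec{\mathbf{1}}$. Your extra remark justifying the invertibility of $I-A(t,u)$ in $\mathbb{Q}[u,u^{-1}][[t]]$ (every step having length at least one) is a sensible addition that the paper leaves implicit.
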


\begin{thm}
\label{meanders-longsteps}
The bivariate generating function for meanders obeying constraints that can be described by a finite automaton is given by
\begin{equation}
\label{gf-meanders-longsteps}
M(t,u)=\frac{G(t,u)}{u^{e}K(t,u)}\prod_{i=1}^{e}(u-u_i(t))
\end{equation}
where $t$ encodes length and $u$ encodes final altitude, $u_i$ $(i=1,\dots, e)$ are the small roots of $K(t,u)$ and $G(t,u)$ is a polynomial in $u$ which will be characterized in (\ref{G}).
\end{thm}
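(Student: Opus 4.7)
The plan is to adapt, step by step, the meander argument of Asinowski--Bacher--Banderier--Gittenberger \cite{ABBG-vkm} to the situation where step weights $t^{|s|}$ may involve $|s|>1$. First I would set up a vector-valued functional equation. Let $\vec M(t,u)$ be the column vector whose $i$-th component is the bivariate generating function for meanders ending in state $i$ of the automaton. A meander is either empty (contributing the indicator $\vec e_0$ of the initial state) or a shorter meander extended by a step that does not push the altitude below zero. Translating this through the adjacency matrix $A(t,u)$ gives
$$(I-A(t,u))\,\vec M(t,u)=\vec e_0-\vec N(t,u),$$
where $\vec N(t,u)$ collects the negative-altitude contributions that must be removed. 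Because the smallest altitude appearing in $P(t,u)$ is $-c$, the correction $\vec N(t,u)$ is supported on Laurent degrees $u^{-1},\ldots,u^{-c}$, and each of its coefficients is a linear combination of the boundary traces $[u^{j}]\vec M(t,u)$ for $j=0,\ldots,c-1$, with weights read off from the negative-altitude entries of $A(t,u)$.

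Next I would reduce this matrix identity to a scalar identity for $M(t,u)=\vec{\mathbf 1}^{T}\vec M(t,u)$. Multiplying on the left by $\vec{\mathbf 1}^{T}\mathrm{adj}(I-A(t,u))$ and then by a suitable power of $u$ to clear $u^{-c}$ denominators yields
$$u^{e}K(t,u)\,M(t,u)=\widetilde G(t,u),$$
where $\widetilde G(t,u)$ is a polynomial in $u$ that is explicit apart from the boundary traces, which still appear as unknown formal power series in $t$.

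Now I would invoke the small-root argument. Since $M(t,u)$ has to be a formal power series in $t$ with Laurent coefficients in $u$ analytic near $u=0$, every small root $u_i(t)$ of $K(t,u)$ must also be a zero of $\widetilde G(t,u)$. Substituting $u=u_i(t)$ for $i=1,\ldots,e$ provides $e$ linear relations among the unknown boundary traces, and, as in \cite{ABBG-vkm}, these relations determine exactly the combinations of traces that can occur in $\widetilde G$. Consequently $\prod_{i=1}^{e}(u-u_i(t))$ divides $\widetilde G(t,u)$; writing $\widetilde G(t,u)=G(t,u)\prod_{i=1}^{e}(u-u_i(t))$ and inserting this into the previous display gives the claimed formula, with $G(t,u)$ characterized by the forthcoming equation~(\ref{G}).

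The main obstacle is the dimension count: I have to verify that the $e$ small-root conditions are independent and match the degrees of freedom carried by the boundary traces. In the length-one setting of \cite{ABBG-vkm} this was handled by a Newton-polygon analysis of $u^{c}(I-A(u))$. With longer steps, the diagonal of $u^{e}(I-A(t,u))$ now carries mixed powers of $t$ and $u$, so the degree analysis has to be redone for the new polynomial $u^{e}K(t,u)$; checking that the count still works out is what makes the generalization non-trivial, and is where most of the real work will go.
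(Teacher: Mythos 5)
Your proposal follows essentially the same route as the paper: the vectorial functional equation with the $\{u^{<0}\}$ correction, multiplication by the adjugate (you use the transposed, column-vector form) to reduce to $u^{e}K(t,u)M(t,u)=\Phi(t,u)$ with $\Phi$ a polynomial in $u$, and the observation that the small roots annihilate $\Phi$, forcing the factorization $\Phi=G(t,u)\prod_{i=1}^{e}(u-u_i(t))$. The dimension count you flag at the end is not required for the theorem as stated --- the paper likewise proves only the divisibility and defers the determination of the boundary unknowns to a coefficient comparison carried out in each application.
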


\textit{Proof of Theorem \ref{walks-longsteps}:} The proof follows the same idea as in the case with steps of length one, which was considered in \cite{ABBG-vkm}. Writing $W_i:=W_i(t,u)$ for the generating function of walks ending in state $X_i$ and using a step-by-step-construction we obtain the following functional equation
$$(W_1,\dots,W_{\ell})=(1,0,\dots, 0)+(W_1,\dots, W_{\ell})\cdot A(t,u),$$
or equivalently
$$(W_1,\dots,W_{\ell})(I-A(t,u))=(1,0,\dots, 0).$$
Multiplying this from the right with $(I-A(t,u))^{-1}=\frac{\text{adj}(I-A(t,u))}{\det(I-A(t,u))}$ we obtain
$$(W_1,\dots,W_{\ell})=\frac{(1,0,\dots,0)\text{adj}(I-A(t,u))}{\det(I-A(t,u))}.$$
The generating function $W(t,u)$ is the sum of the generating functions $W_i(t,u)$ thus we have that
$$W(t,u)=(W_1,\dots,W_{\ell})\vec{\mathbf{1}}=\frac{(1,0,\dots,0)\text{adj}(I-A(t,u))\vec{\mathbf{1}}}{\det(I-A(t,u))}$$
which finishes the proof. \hfill$\Box$

\begin{cor}
The generating function for bridges is given by
$$B(t)=[u^0]W(t,u)=\frac{1}{2\pi i}\int_{|u|=\varepsilon}\frac{W(t,u)}{u}=\sum_{i=1}^{e}\mathrm{Res}_{u=u_i}\frac{W(t,u)}{u}.$$
\end{cor}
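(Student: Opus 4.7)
The plan is to chain three equalities: the first is a direct combinatorial identity, the second is Cauchy's integral formula, and the third is the residue theorem. Throughout I would work with $|t|$ small enough that the $e$ small roots $u_1(t),\dots,u_e(t)$ of the kernel $K(t,u) = \det(I-A(t,u))$ are well-separated from the remaining (large) roots.

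For the first equality, write $W(t,u) = \sum_{n\in\Z} W_n(t)\,u^n$. By Theorem \ref{walks-longsteps} and the convention that $u$ encodes final altitude, the coefficient $W_n(t)$ enumerates walks of altitude $n$; in particular $W_0(t) = B(t)$, which gives $B(t) = [u^0]W(t,u)$.

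For the second equality, observe that $W(t,u)$ is a rational function of $u$ whose $u$-singularities are exactly the roots of $K(t,u)$. For $|t|$ sufficiently small, $W(t,u)$ is therefore holomorphic in an annulus $\rho_1(t) < |u| < \rho_2(t)$ with all $e$ small roots strictly inside and all large roots strictly outside, and the formal expansion $\sum_n W_n(t)\,u^n$ coincides with its Laurent series in this annulus. Choosing $\varepsilon$ in the annulus, Cauchy's integral formula then yields $[u^0]W(t,u) = \frac{1}{2\pi i}\int_{|u|=\varepsilon}\frac{W(t,u)}{u}\,du$.

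For the third equality, I would apply the residue theorem on the disk $|u|\le\varepsilon$. The enclosed singularities of $\frac{W(t,u)}{u}$ are the small roots $u_1,\dots,u_e$ together with a potential contribution from $u = 0$ coming from the factor $1/u$. The main obstacle is to verify that this residue at $u = 0$ vanishes, i.e.\ that $W(t,0) = 0$. This should follow from the structure of (\ref{gf-walks-longsteps}): the entries of $A(t,u)$ contain negative powers of $u$ up to $u^{-c}$ (coming from down-steps), and after multiplying numerator and denominator of (\ref{gf-walks-longsteps}) by a common power of $u$ to clear these negative powers, the resulting numerator acquires an overall factor of $u$ while the cleared denominator evaluates at $u=0$ to a nonzero quantity (controlled by the weights of the maximal down-steps at the states reachable from $X_0$). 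Granted this, the residue theorem gives $\frac{1}{2\pi i}\int_{|u|=\varepsilon}\frac{W(t,u)}{u}\,du = \sum_{i=1}^{e}\mathrm{Res}_{u=u_i}\frac{W(t,u)}{u}$, completing the proof.
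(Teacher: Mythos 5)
Your overall architecture (coefficient extraction, then Cauchy's formula on a circle inside the annulus of convergence, then the residue theorem) is the natural one, and the first two equalities are handled correctly; note that the paper states this corollary without proof, so there is nothing to compare against beyond the mathematics itself. The genuine gap sits exactly where you write ``Granted this'': the vanishing of $\mathrm{Res}_{u=0}\frac{W(t,u)}{u}$ is not established, and the mechanism you propose for it is not valid in general. After clearing negative powers of $u$, what matters is the order of the pole at $u=0$ of $\det(I-A(t,u))$ relative to that of the numerator $(1,0,\dots,0)\,\mathrm{adj}(I-A(t,u))\,\vec{\mathbf{1}}$, and nothing in Theorem \ref{walks-longsteps} forces the former to exceed the latter: the adjugate entries are $(\ell-1)\times(\ell-1)$ minors that can carry just as many negative powers of $u$ as the determinant, since forbidden transitions may remove precisely the down-steps from some rows of $A$. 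Even in the scalar Banderier--Flajolet case the cleared numerator of $W(t,u)/u$ is $u^{c-1}$, which has ``an overall factor of $u$'' only when $c\ge 2$; the correct argument there is that $u^c\bigl(1-tP(u)\bigr)$ has nonzero constant term $-tp_{-c}$.

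Moreover, the step you skip can actually fail. Take $\mathcal{S}=\{U,D\}$ with $U=(1,1)$, $D=(1,-1)$ and forbid the pattern $p=[D,D]$. The automaton has adjacency matrix $A=\begin{pmatrix} tu & tu^{-1}\\ tu & 0\end{pmatrix}$, kernel $K(t,u)=1-tu-t^2$, and $W(t,u)=\frac{1+tu^{-1}}{1-tu-t^2}=\frac{u+t}{u(1-tu-t^2)}$. Here $K$ has no small roots at all ($e=0$), so the claimed formula would return $B(t)=0$, whereas $B(t)=[u^0]W(t,u)=\frac{1}{(1-t^2)^2}$ (for instance there are $3$ bridges of length $4$, namely $UDUD$, $DUUD$, $DUDU$); the entire bridge generating function sits in $\mathrm{Res}_{u=0}\frac{W(t,u)}{u}$. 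Note also that in this example $W$ is not even regular at $u=0$, so your reduction to ``$W(t,0)=0$'' already presupposes something that can fail. To close the gap you must either add the correction term $\mathrm{Res}_{u=0}\frac{W(t,u)}{u}$ to the statement, or isolate and verify a hypothesis on the automaton guaranteeing that the $u$-valuation of $(1,0,\dots,0)\,\mathrm{adj}(I-A)\,\vec{\mathbf{1}}$ exceeds that of $\det(I-A)$ by at least one; the latter holds in the paper's examples and can be checked there directly, but it is a genuine condition, not a formality.
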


\textit{Proof of Theorem \ref{meanders-longsteps}:} This proof works similarly as the one for walks, only that now we also have to take care of the fact that the walk is not allowed to attain negative altitude. Writing $M_i=M_i(t,u)$ for the generating function of meanders ending in state $X_i$ of the automaton and using a step-by step construction we obtain the following vectorial functional equation
$$(M_1,\dots, M_{\ell})=(1,0,\dots, 0)+ (M_1,\dots, M_{\ell})\cdot A(t,u)-\{u^{<0}\}((M_1,\dots, M_{\ell})\cdot A(t,u)).$$
This is equivalent to
$$(M_1,\dots, M_{\ell})(I-A(t,u))=(1,0,\dots, 0)-\{u^{<0}\}((M_1,\dots, M_{\ell})\cdot A(t,u)).$$
Writing $F:=(F_1,\dots,F_{\ell})$ for the right hand side of the above equation we obtain
\begin{equation}
\label{blue}
(M_1,\dots, M_{\ell})(I-A(t,u))=(F_1,\dots, F_{\ell}).
\end{equation}
Multiplying (\ref{blue}) from the right by $(I-A(t,u))^{-1}=\frac{\text{adj}(I-A(t,u))}{\det(I-A(t,u))}$ we obtain
$$(M_1,\dots, M_{\ell})=(F_1,\dots,F_{\ell})\cdot \frac{\text{adj}(I-A(t,u))}{\det(I-A(t,u))}.$$
The generating function $M(t,u)$ is the sum of all the generating functions $M_i$. Using this, defining
$$\vec{v}:=\text{adj}(I-A(t,u))\vec{\mathbf{1}}$$
and using
$$\det(I-A(t,u))=K(t,u)$$
we obtain
\begin{equation}
\label{green}
M(t,u)=\frac{(F_1,\dots, F_{\ell})\vec{v}}{K(t,u)}.
\end{equation}
Let $u_i=u_i(t)$ be a small root of the kernel $K(t,u)$. We plug $u=u_i$ into (\ref{blue}). The matrix $(I-A(t,u))|_{u=u_i}$ is then singular. Furthermore, we observe that $\vec{v}_{u=u_i}$ is an eigenvector of $(I-A(t,u))|_{u=u_i}$ for the eigenvalue $\lambda=0$.

Thus, multiplying (\ref{blue}) from right with $\vec{v}_{u=u_i}$ the left hand side of the equation vanishes. Said differently, the equation
$$(F_1(t,u),\dots, F_{\ell}(t,u))\vec{v}(t,u)=0$$
is satisfied by all small roots $u_i(t)$ of $K(t,u)$.

Let
\begin{equation}
\label{Phi}
\Phi(t,u):=u^{e}(F_1(t,u),\dots, F_{\ell}(t,u))\vec{v}(t,u).
\end{equation}
Note that $\Phi$ is a Laurent polynomial in $u$, because $F_i$ and $\vec{v}$ are Laurent polynomials in $u$ by construction. Because of (\ref{green}) we have that
$$\Phi(t,u)=u^{e}M(t,u)K(t,u)$$
and since $M$ is a power series in $u$ and $K$ has exactly $e$ small roots the Laurent-polynomial $\Phi$ contains no negative powers in $u$ and is a polynomial in $u$. Each small root $u_i$ is a root of the polynomial equation $\Phi(t,u)=0$, thus we have that
\begin{equation}
\label{G}
\Phi(t,u)=G(t,u)\prod_{i=1}^{e} (u-u_i(t))
\end{equation}
where $G(t,u)$ is a polynomial in $u$ and formal power series in $t$. It can be computed by comparing coefficients. Plugging $G$ in (\ref{green}) we obtain
$$M(t,u)=\frac{G(t,u)}{u^{e}K(t,u)}\prod_{i=1}^{e}(u-u_i(t))$$
which finishes the proof. \hfill$\Box$

\begin{cor}
The generating function $E(t)$ for excursions with restrictions described by a finite automaton $A(t,u)$ satisfies
$$E(t)=M(t,0)=\left.\frac{G(t,u)}{u^{e}K(t,u)}\prod_{i=1}^{e}(u-u_i(t))\right|_{u=0}.$$
\end{cor}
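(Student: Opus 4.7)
The plan is to reduce to the meander case and specialize. First I would recall that by definition an excursion is a meander that ends on the $x$-axis, i.e., at final altitude zero. Since the variable $u$ in the bivariate meander generating function $M(t,u)$ marks the final altitude, the excursion generating function is the $u^{0}$-coefficient:
$$E(t) \;=\; [u^{0}]\, M(t,u).$$
Because $M(t,u)\in\Q[[t]][[u]]$ by construction (meanders have nonnegative final altitude, so only nonnegative powers of $u$ appear), extracting $[u^{0}]$ is the same as substituting $u=0$. This already gives $E(t) = M(t,0)$, and plugging the explicit formula (\ref{gf-meanders-longsteps}) of Theorem \ref{meanders-longsteps} into this identity produces the displayed expression.

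The only technical point I would then check is that the right-hand side indeed admits a specialization at $u=0$, since its denominator carries an explicit factor $u^{e}$. I would justify this by invoking a byproduct of the proof of Theorem \ref{meanders-longsteps}: the quantity $\Phi(t,u)=u^{e}M(t,u)K(t,u)=G(t,u)\prod_{i=1}^{e}(u-u_i(t))$ is an honest polynomial in $u$ (no negative powers), so equivalently $u^{e}K(t,u)$ is a polynomial in $u$ whose constant coefficient in $u$ is a nonzero element of $\Q[[t]]$. Hence the apparent pole $1/u^{e}$ is removable, and the evaluation at $u=0$ delivers a well-defined formal power series in $t$ that agrees with $M(t,0)$.

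The corollary is essentially a direct translation of the definition of an excursion, so I do not anticipate any real obstacle; the substantive work has been done in Theorem \ref{meanders-longsteps}, and the only remaining step is the removable-singularity bookkeeping just indicated.
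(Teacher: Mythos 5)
Your argument is correct and is exactly the (immediate) justification the paper intends: the corollary is stated without proof as a direct consequence of Theorem \ref{meanders-longsteps}, since an excursion is a meander of final altitude zero and $M(t,u)$ is a power series in $u$, so $E(t)=[u^0]M(t,u)=M(t,0)$. Your added remark that the factor $u^{e}$ in the denominator is a removable singularity (because $u^{e}K(t,u)$ is a genuine polynomial in $u$ with nonzero constant coefficient) is a sensible piece of bookkeeping that the paper leaves implicit.
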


\section{Examples}

In this section we will consider some examples illustrating applications of the previous theorems. The first example is more of the simple and introductory kind and deals with Schröder paths avoiding the pattern $UF$, the second one counts Schröder paths having $k$ ascents and proves a conjecture about the asymptotic behavior of the expected number of ascents.

\subsection{Number of Schröder paths of semilength $n$ avoiding $UF$}

Schröder paths are lattice paths consisting of the steps $U=(1,1), D=(1,-1)$ and $F=(2,0)$ which start at $(0,0)$, end at $(2n,0)$ and never go below the $x$-axis. In this section we are dealing with Schröder paths of length $2n$ avoiding $p=UF$. These objects are enumerated by OEIS \oeis{A007317} and have been studied by Yan in \cite{Yan}, where a bijection with Schröder paths without peaks at even level as well as two pattern avoiding partitions were constructed.

The generating function for Schröder paths avoiding $UF$ can be obtained by a first passage decomposition -- if $S^*$ denotes all Schröder paths avoiding $UF$, then
$$S^*=\varepsilon \cup F\times S^* \cup UD \times S^* \cup U \times (S^*\setminus\{\varepsilon \cup F\times S^*)\times D \times S^*,$$
i.e. a Schröder path avoiding $UF$ is either empty, or starts with either F followed by another Schröder path avoiding $UF$, UD and another Schröder path avoiding $UF$ or starts with an up step, followed by an nonempty Schröder path avoiding $UF$ which does not start with F, a down step to altitude zero (the first passage) and another Schröder path avoiding $UF$. For generating functions, this translates to
$$F(x)=1+2xF(x)+x(F(x)-1-xF(x))F(x),$$
where $x$ encodes semilength. From here, the generating function can easily be obtained by solving a quadratic equation. However, in many cases a first passage decomposition does not work while the enumeration problem can still be solved by the vectorial kernel method.

The automaton describing Schröder paths avoiding $UF$ is given by

\begin{center}
\includegraphics[height=3cm]{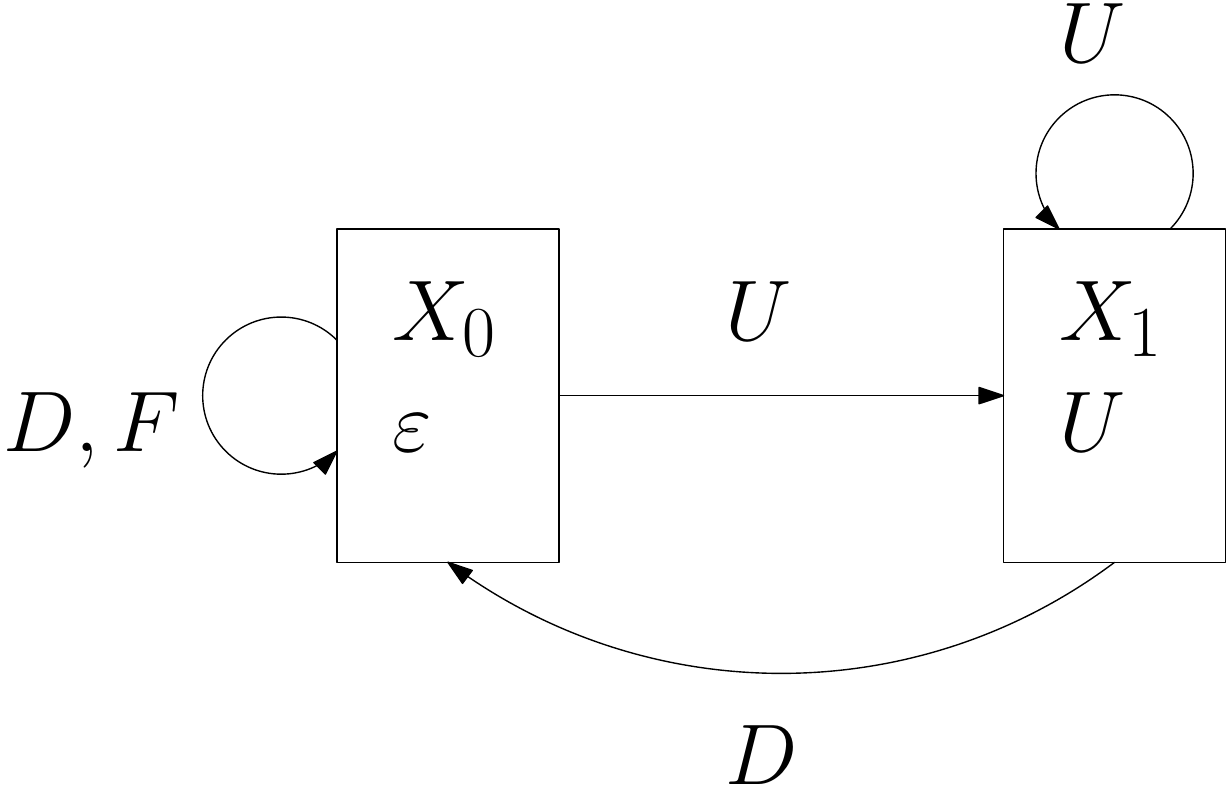}
\end{center}

Its adjacency matrix is
$$A(t,u)=
\begin{pmatrix}
t^2+tu^{-1} & tu\\
tu^{-1} & tu\\
\end{pmatrix}.
$$
Thus the kernel is given by
$$K(t,u)=\det(I-A)=\frac{t^3u^2-t^2u-tu^2-t+u}{u}.$$
Its roots are
$$u_{1/2}=\frac{1-t^2\pm\sqrt{1-6t^2+5t^4}}{2t(1-t^2)},$$
the root with minus being the small root.

Denote $M_0$ the generating function of the walks ending in state $X_0$, i.e., with a $D$ or $F$-step, and $M_1$ the generating function of the walks ending in state $X_1$, i.e., in an $U$-step. Via a step-by-step-construction we obtain the following system of equations for the generating functions:
$$(M_0,M_1)=1+(M_0,M_1)A-\{u^{<0}\}(M_0,M_1)A.$$
This can be rephrased as
$$(M_0,M_1)(I-A)=1-\{u^{<0}\}(M_0,M_1)A.$$
We have that
$$\{u^{<0}\}(M_0,M_1)A=(tu^{-1}m_0,0),$$
where $m_0=[u^0]M_0+M_1$. Thus the forbidden vector $F$ is
$$F=1-\{u^{<0}\}(M_0,M_1)A=(1-tu^{-1}m_0, 0).$$
Using
$$\text{adj}(I-A)=
\begin{pmatrix}
1-tu & tu\\
tu^{-1} & 1-tu^{-1}-t^2
\end{pmatrix}$$
we obtain
$$\vec{v}=\text{adj}(I-A)\cdot\begin{pmatrix} 1\\ 1\end{pmatrix}=\begin{pmatrix} 1\\1-t^2\end{pmatrix}.$$
Thus we have that
$$\Phi(t,u)=u^eF\vec{v}=u-tm_0.$$
Using 
$$\Phi(t,u)=G(t,u)(u-u_1)$$
and comparing coefficients we obtain
$$G(t,u)=1.$$
Using
$$M(t,u)=\frac{G(t,u)}{u^e K(t,u)}(u-u_1(t))=\frac{1}{t^3u-t^2u-tu^2-t+u}\left(u-\frac{1-t^2-\sqrt{1-6t^2+5t^4}}{2t(1-t^2)}\right)$$
we obtain for the generating function $M(t)$ of meanders
$$M(t)=M(t,1)=\frac {2\,{t}^{3}-{t}^{2}-2\,t-\sqrt {5\,{t}^{4}-6\,{t}^{2}+1}+1}{2t \left( {t}^{2}-1 \right)  \left( {t}^{3}-{t}^{2}-2\,t+1 \right) }$$
and the generating function $E(t)$ of excursions
$$E(t)=M(t,0)=\frac{1-t^2-\sqrt{1-6t^2+5t^4}}{2t^2(1-t^2)}.$$
Making a transition to semilength (i.e., the substitution $x:=t^2$) we obtain exactly the same result for the generating function as in \cite{Yan}.

\subsection{Schröder paths of semilength n having $k$ ascents}

\begin{defi}
An \em{ascent}\em{} in a Schröder path is a maximal string of up-steps.
\end{defi}

\begin{figure}[htb!]
\begin{center}
\includegraphics[scale=0.6]{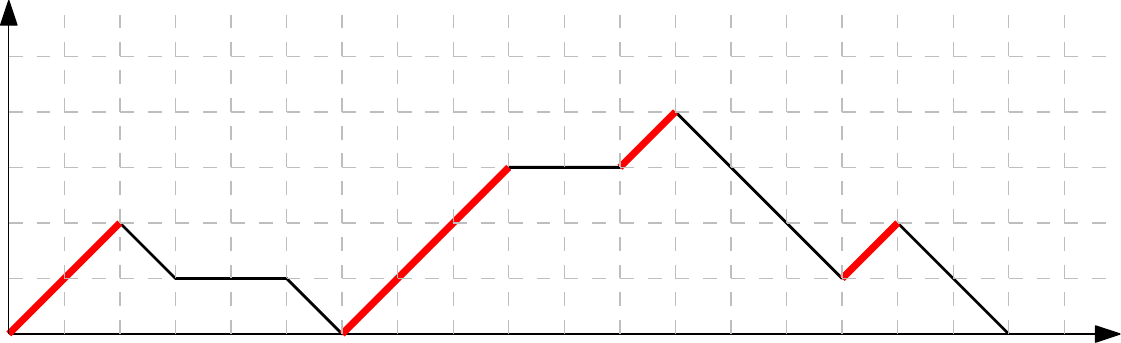}
\caption{A Schröder path with $k=4$ ascents (marked in red).} \label{pic-asc}
\end{center}
\end{figure}

\begin{thm}
\label{schroeder-asc}
Let $X_n$ be the random variable counting ascents in a Schröder path of length $2n$ which is chosen uniformly at random among all Schröder paths of length $2n$. Then $\mathbb{E} (X_n) \sim (\sqrt{2} -1 ) n$ for $n\to infty$.
\end{thm}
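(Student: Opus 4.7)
My plan is to apply Theorem~\ref{meanders-longsteps} with an auxiliary catalytic variable $v$ marking ascents, and then to perform singularity analysis on the resulting excursion generating function. The combinatorial setup is a two-state automaton tracking whether the previous step was a $U$: state $X_0$ corresponds to ``empty path or last step $D$ or $F$'' and $X_1$ to ``last step $U$''. Since a new ascent is started exactly when a $U$ is read from $X_0$, I weight only this transition by $v$, yielding
$$A(t,u,v)=\begin{pmatrix} tu^{-1}+t^2 & vtu\\ tu^{-1}+t^2 & tu\end{pmatrix}.$$
After multiplication by $u$, the kernel $K(t,u,v)=\det(I-A)$ is quadratic in $u$, so there is a single small root $u_1(t,v)$ (the case $e=1$). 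Computing $\Phi$ as in Theorem~\ref{meanders-longsteps} gives the clean factorization $\Phi(t,u,v)=(u-t\,m_0(t,v))(1+(v-1)tu)$. Since the second factor does not vanish at $v=1$, the small-root condition forces $u_1(t,v)=t\,m_0(t,v)$, whence $E(t,v)=m_0(t,v)=u_1(t,v)/t$. At $v=1$ this is the classical Schröder excursion generating function, whose dominant singularity lies at $t_0=\sqrt{2}-1$, where crucially $u_1(t_0,1)=1$.

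I would then compute
$$\mathbb{E}(X_n)=\frac{[t^{2n}]\,\partial_v E(t,v)\big|_{v=1}}{[t^{2n}]\,E(t,1)},$$
where the closed form $E(t,v)=u_1(t,v)/t$ reduces the numerator to $t^{-1}\,\partial_v u_1(t,v)|_{v=1}$. Implicit differentiation of the kernel equation at $v=1$, combined with the identity $2t\,u_1(t,1)=1-t^2-\sqrt{1-6t^2+t^4}$, gives
$$\partial_v u_1(t,v)\big|_{v=1}=\frac{t^2\,u_1(t,1)\bigl(1+t\,u_1(t,1)\bigr)}{\sqrt{1-6t^2+t^4}}.$$
Consequently $\partial_v E(t,v)|_{v=1}$ inherits the singularity at $t_0$ but with a stronger $(1-t^2/t_0^2)^{-1/2}$ behaviour, whereas $E(t,1)$ itself has only a $(1-t^2/t_0^2)^{1/2}$ singularity.

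The final step is singularity analysis via Flajolet--Odlyzko transfer theorems: $[t^{2n}]E(t,1)\sim c_1\,t_0^{-2n}\,n^{-3/2}$ and $[t^{2n}]\partial_v E(t,v)|_{v=1}\sim c_2\,t_0^{-2n}\,n^{-1/2}$, so the quotient is linear in $n$. Reading the constants off the singular expansions (using $u_1(t_0,1)=1$, $t_0=\sqrt{2}-1$ and $1/t_0^2-t_0^2=4\sqrt{2}$) the ratio $c_2/c_1$ is expected to simplify to $1/(\sqrt{2}+1)=\sqrt{2}-1$, yielding the claim. The main obstacle is not conceptual but algebraic bookkeeping: keeping the constants $2^{\pm 1/4}$, $\sqrt{2}\pm 1$ and $t_0^{\pm 1}$ organised so that the final quotient collapses cleanly. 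That the answer is the algebraically simple number $\sqrt{2}-1$ ultimately reflects the fact that the small root of the Schröder kernel equals $1$ at its dominant singularity.
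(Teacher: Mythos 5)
Your proposal is correct and follows essentially the same route as the paper: a two-state automaton with a catalytic variable $v$ marking ascents, the vectorial kernel method yielding $E(t,v)=u_1(t,v)/t$, and singularity analysis of $E(x,1)\sim c_1(1-x/\rho)^{1/2}$ versus $\partial_vE(x,v)|_{v=1}\sim c_2(1-x/\rho)^{-1/2}$ at $\rho=3-2\sqrt{2}$. The only (immaterial) differences are that you mark the start of each ascent rather than its end (the cycle weight, hence the kernel, is identical) and that you obtain $\partial_v u_1|_{v=1}=t^2u_1(1+tu_1)/\sqrt{1-6t^2+t^4}$ by implicit differentiation of the kernel equation instead of differentiating the explicit closed form; both give the same asymptotically relevant singular part and the constant $\sqrt{2}-1$.
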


\noindent\textbf{Remark:} This theorem was formulated as conjecture by D. Callan in the OEIS, entry \oeis{A090981}.\\
\\
Before we give the proof, let us first recall some central definitions and theorems of analytic combinatorics. Proofs and more details can be found in \cite{FS-anacomb}.

\begin{defi}
Let $R$ be a real number greater than one, and $\phi$ be an angle such that $0<\phi<\frac{\pi}{2}$. An open $\Delta$-\em{}domain\em{} (at $1$), denoted $\Delta (\phi, R)$ is then defined as
$$\Delta(\phi, R):=\{z: |z|<r, z\not=1, |\arg(z-1)|<\phi\}.$$
For any complex number $\zeta\not=0$ a $\Delta$-\em{}domain at\em{} $\zeta$ is the image of a $\Delta$-domain at $1$ under the mapping $z\mapsto \zeta z$. A function is called $\Delta$-\em{}analytic\em{} if it is analytic in some $\Delta$-domain.
\end{defi}

\begin{thm}
\label{thm-fs}
Let $f(z)=(1-z)^{-\alpha}$ for $\alpha\in\mathbb{C}\setminus\mathbb{Z}_{\leq 0}$. Then
$$[z^n]f(z)=\frac{n^{\alpha-1}}{\Gamma(\alpha)}\left(1+O\left(\frac{1}{n}\right)\right),$$
where $\Gamma$ denotes the Gamma-function.
\end{thm}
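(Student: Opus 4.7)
The plan is to reduce the coefficient extraction to an elementary identity involving Gamma functions and then extract the asymptotics from Stirling's formula. The two ingredients are the generalized binomial theorem, which gives a closed form for $[z^n](1-z)^{-\alpha}$, and the classical asymptotic expansion of the ratio $\Gamma(n+\alpha)/\Gamma(n+1)$.

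First I would write out the Newton binomial series
$$(1-z)^{-\alpha}=\sum_{n\geq 0}\binom{-\alpha}{n}(-z)^n=\sum_{n\geq 0}\frac{\alpha(\alpha+1)\cdots(\alpha+n-1)}{n!}\,z^n,$$
which converges in $|z|<1$ for every $\alpha\in\mathbb{C}$. Since $\alpha\notin\mathbb{Z}_{\leq 0}$, the rising factorial in the numerator equals $\Gamma(n+\alpha)/\Gamma(\alpha)$, and hence
$$[z^n](1-z)^{-\alpha}=\frac{1}{\Gamma(\alpha)}\cdot\frac{\Gamma(n+\alpha)}{\Gamma(n+1)}.$$
This reduces the theorem to proving the single asymptotic identity $\Gamma(n+\alpha)/\Gamma(n+1)=n^{\alpha-1}\bigl(1+O(1/n)\bigr)$.

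Next I would apply Stirling's formula in its logarithmic form,
$$\log\Gamma(z)=\bigl(z-\tfrac{1}{2}\bigr)\log z-z+\tfrac{1}{2}\log(2\pi)+O(|z|^{-1}),$$
valid uniformly as $z\to\infty$ in any sector avoiding the negative real axis. Subtracting the expansions at $z=n+\alpha$ and $z=n+1$, expanding $\log(n+\alpha)=\log n+\alpha/n+O(1/n^2)$, and collecting terms, the leading contribution in the exponent collapses to $(\alpha-1)\log n$, with every remaining term of order $O(1/n)$. Exponentiating yields $\Gamma(n+\alpha)/\Gamma(n+1)=n^{\alpha-1}\bigl(1+O(1/n)\bigr)$, and combined with the previous identity this is exactly the stated formula.

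The main technical obstacle is the careful bookkeeping of the error term in Stirling's expansion for complex $\alpha$: one has to verify that the several subdominant contributions (from $(z-\tfrac12)\log z$ evaluated at $z=n+\alpha$ versus $z=n+1$, and from the $O(|z|^{-1})$ remainder) genuinely combine to $O(1/n)$ in the exponent, so that after exponentiation the multiplicative error is indeed $1+O(1/n)$ and not merely $1+o(1)$. An alternative route, which avoids Stirling altogether and is the one followed in Flajolet and Sedgewick's general transfer theorems, is to begin from Cauchy's coefficient integral for $(1-z)^{-\alpha}$, deform the contour to a Hankel contour $\mathcal{H}$ encircling the singularity $z=1$, and then use Hankel's representation $1/\Gamma(\alpha)=\frac{1}{2\pi i}\int_{\mathcal{H}}e^{t}t^{-\alpha}\,dt$ to read off the asymptotic formula directly. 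Either route establishes the desired expansion.
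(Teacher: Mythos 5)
The paper does not actually prove this statement: it is recalled verbatim from Flajolet--Sedgewick \cite{FS-anacomb} (it is their Theorem~VI.1) and used as a black box, so there is no internal proof to compare against. Your argument is correct and self-contained. The reduction to the exact identity $[z^n](1-z)^{-\alpha}=\frac{\Gamma(n+\alpha)}{\Gamma(\alpha)\,\Gamma(n+1)}$ via the Newton binomial series is valid precisely because $\alpha\notin\mathbb{Z}_{\leq 0}$ guarantees that $\Gamma(\alpha)$ and $\Gamma(n+\alpha)$ are defined, and your Stirling computation does close: writing $\log\Gamma(n+\alpha)-\log\Gamma(n+1)$, the terms $\bigl(n+\alpha-\tfrac12\bigr)\log(n+\alpha)-\bigl(n+\tfrac12\bigr)\log(n+1)$ contribute $(\alpha-1)\log n+(\alpha-1)+O(1/n)$, which is exactly cancelled down to $(\alpha-1)\log n+O(1/n)$ by the $-(n+\alpha)+(n+1)=1-\alpha$ from the linear terms, so exponentiation gives the multiplicative $1+O(1/n)$ and not merely $1+o(1)$. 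It is worth noting that your primary route (exact binomial coefficient plus Stirling) is the elementary one and actually yields the full expansion $n^{\alpha-1}\bigl(1+\frac{\alpha(\alpha-1)}{2n}+\cdots\bigr)$, but it works only because $(1-z)^{-\alpha}$ admits a closed-form coefficient; the Hankel-contour route you mention as an alternative is the one Flajolet and Sedgewick follow, and it is the one that extends to the Transfer Theorem (Theorem~\ref{transfer-thm} in the paper), where no exact coefficient formula is available and only an $O$- or $o$-bound on the function near the singularity is assumed. For the statement as given, either route is a complete proof.
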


\begin{thm}[Transfer theorem]
\label{transfer-thm}
Suppose that $f$ satisfies in an intersection of a neighborhood of 1 with a $\Delta$-domain the condition
$$f(z)=O\left((1-z)^{-\alpha}\left(\log\frac{1}{1-z}\right)^{\beta}\right).$$
Then
$$[z^n]f(z)=O(n^{\alpha -1}(\log n)^\beta).$$
The same statement also holds for $o$-notation.
\end{thm}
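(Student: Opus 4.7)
The plan is to build a two-state finite automaton that marks the \emph{starts} of ascents, feed it into Theorem~\ref{meanders-longsteps} to obtain a trivariate generating function $E(t,v)$ for Schr\"oder excursions refined by the number of ascents, and then read off the expectation using the moving-singularity principle for square-root singularities.

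First I would set up the automaton. Let state $X_0$ mean ``the previous step was $D$ or $F$, or we are at the start'' and $X_1$ mean ``the previous step was $U$''. A $U$-step taken from $X_0$ begins a new ascent and receives the extra weight $v$, while a $U$-step from $X_1$ prolongs the current ascent and carries no extra weight. The adjacency matrix is
$$A(t,u,v) = \begin{pmatrix} t^2 + tu^{-1} & tuv \\ t^2 + tu^{-1} & tu \end{pmatrix}.$$
A direct computation shows that $u\det(I-A(t,u,v))$ is a quadratic in $u$ whose unique small root is
$$u_1(t,v) = \frac{(1-t^2 v) - \sqrt{D(t,v)}}{2t\bigl(1-t^2(1-v)\bigr)}, \qquad D(t,v) := 1 - 2t^2(v+2) + t^4(v-2)^2.$$
From here I would mechanically follow the worked example of Section~4.1: compute $\mathrm{adj}(I-A)\vec{\mathbf{1}}$ and the forbidden vector, form the polynomial $\Phi(t,u,v)$ defined in~(\ref{Phi}), identify $G(t,u,v)$ from the factorization $\Phi = G \cdot (u - u_1(t,v))$, and set $u=0$ in $M(t,u,v)$ to obtain $E(t,v) = M(t,0,v)$. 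Setting $v=1$ must reproduce the classical Schr\"oder series, whose square-root singularity sits at $t = \rho(1) = \sqrt{3 - 2\sqrt{2}} = \sqrt{2}-1$.

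The asymptotics then follow from the standard principle that whenever the counting generating function has a simple square-root singularity at a point $\rho(v)$ depending analytically on the marking variable $v$, the expectation of the marked additive parameter satisfies
$$\mathbb{E}(X_n) \sim -\,2n\,\frac{\rho'(1)}{\rho(1)},$$
the factor $2n$ appearing because Schr\"oder paths of length $2n$ are counted by $[t^{2n}]E(t,1)$. Solving $D(t,v)=0$ in $t^2$ and taking the smaller positive root gives
$$\rho(v)^2 = \frac{(v+2) - 2\sqrt{2v}}{(v-2)^2},$$
and a short logarithmic-differentiation calculation yields $\rho'(1)/\rho(1) = (1-\sqrt{2})/2$. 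Substituting, $\mathbb{E}(X_n) \sim (\sqrt{2}-1)\,n$, which is the conjectured asymptotic.

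The main technical obstacle is justifying the moving-singularity formula in this setting. One must show that for $v$ in a complex neighborhood of $1$ the function $E(t,v)$ remains $\Delta$-analytic in $t$ with a unique dominant singularity at $t=\rho(v)$ of square-root type, so that Theorems~\ref{thm-fs} and~\ref{transfer-thm} apply with estimates uniform in $v$ and the leading singular coefficient may be differentiated at $v=1$. The explicit closed forms for $u_1(t,v)$ and $\rho(v)^2$ reduce this to routine checks: that $D(t,v)$ has no other small zeros and that the prefactor $1 - t^2(1-v)$ stays bounded away from zero in a neighborhood of $(\rho(1),1)$.
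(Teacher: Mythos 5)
Your proposal does not address the statement you were asked to prove. Theorem \ref{transfer-thm} is the transfer theorem of singularity analysis: a purely analytic assertion that an $O$-bound (or $o$-bound) of the form $(1-z)^{-\alpha}\bigl(\log\frac{1}{1-z}\bigr)^{\beta}$, valid on the intersection of a $\Delta$-domain with a neighborhood of $1$, transfers to the coefficient bound $O(n^{\alpha-1}(\log n)^{\beta})$. What you have written instead is a sketch of a proof of Theorem \ref{schroeder-asc} (the $(\sqrt{2}-1)n$ asymptotic for the expected number of ascents in Schr\"oder paths), which is a downstream \emph{application} that uses Theorem \ref{transfer-thm} as an ingredient. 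Nothing in your argument --- the automaton, the kernel computation, the moving-singularity formula $\mathbb{E}(X_n)\sim -2n\,\rho'(1)/\rho(1)$ --- bears on why a growth bound on a function inside a $\Delta$-domain controls its Taylor coefficients. Indeed, you explicitly invoke ``Theorems \ref{thm-fs} and \ref{transfer-thm}'' in your final paragraph, so as a proof of the transfer theorem your argument would be circular.

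For the record, the paper does not prove Theorem \ref{transfer-thm} either; it is quoted from Flajolet and Sedgewick, with the text explicitly deferring proofs to that reference. The standard argument applies Cauchy's formula $[z^n]f(z)=\frac{1}{2\pi i}\oint f(z)\,z^{-n-1}\,dz$ over a Hankel-type contour contained in the $\Delta$-domain that hugs the singularity at $1$ (an inner circle of radius about $1/n$ around $1$, two rectilinear segments at angle $\phi$, and an outer circular arc), bounding each piece separately using the hypothesis; that is what you would need to reproduce or cite. If your intention was in fact to prove Theorem \ref{schroeder-asc}, your sketch is broadly compatible with the paper's method (same automaton, same kernel, same $E(x,v)$), though the paper extracts asymptotics by expanding $[x^n]E(x,1)$ and $[x^n]\partial_v E(x,v)|_{v=1}$ directly at the fixed singularity $x=3-\sqrt{8}$ via Corollary \ref{cor-fs}, rather than by differentiating a moving singularity $\rho(v)$ --- but that theorem is not the statement under review here.
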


\begin{cor}
\label{cor-fs}
Let $f(z)$ be $\Delta$-analytic and $f(z)\sim (1-z)^{-\alpha}$ for $z\to 1, z\in \Delta$ and $\alpha\not\in\mathbb{Z}_{\leq 0}$. Then
$$[z^n]f(z)\sim\frac{n^{\alpha -1}}{\Gamma(\alpha)}.$$
\end{cor}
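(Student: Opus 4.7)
The plan is to realize the ascent statistic via a small automaton, apply the vectorial kernel method (Theorem~\ref{meanders-longsteps}) in a trivariate setting, and then read off the asymptotic mean by singularity analysis. Concretely, I introduce a two-state automaton with $X_0$ meaning ``the previous step was not $U$'' (or we are at the start) and $X_1$ meaning ``the previous step was $U$'', and mark the transition $X_0\to X_1$ via $U$ with an additional catalytic variable $v$ that counts ascents (this is exactly the transition at which a new maximal run of up-steps begins). The adjacency matrix becomes
$$A(t,u,v)=\begin{pmatrix} t^{2}+tu^{-1} & tuv\\ t^{2}+tu^{-1} & tu\end{pmatrix},$$
and since $v$ only rescales some entries, the derivation of Theorem~\ref{meanders-longsteps} goes through verbatim with $v$ as an inert parameter: the small root $u_{1}(t,v)$ of $K(t,u,v):=\det(I-A(t,u,v))$ and the polynomial $G(t,u,v)$ of formula (\ref{G}) are algebraic in $t$ with coefficients in $\Q[v]$.

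From this the excursion generating function is $E(t,v):=M(t,0,v)$, computed exactly as in the worked example of Section~4.1. At $v=1$ it must recover the classical large Schröder generating function $E(t,1)=\frac{1-t^{2}-\sqrt{1-6t^{2}+t^{4}}}{2t^{2}}$, whose dominant singularity in $t$ is $\rho=\sqrt{2}-1$ (since $1-6t^{2}+t^{4}$ vanishes at $t^{2}=3-2\sqrt{2}=(\sqrt{2}-1)^{2}$). To access $\mathbb E(X_n)$ I compute $T(t):=\partial_{v}E(t,v)\big|_{v=1}$ and use
$$\mathbb E(X_{n})=\frac{[t^{2n}]\,T(t)}{[t^{2n}]\,E(t,1)}.$$

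Writing the singular expansion $E(t,v)=A(v)-B(v)\bigl(1-t/\rho(v)\bigr)^{1/2}+O\bigl(1-t/\rho(v)\bigr)$ in a $\Delta$-domain at $\rho(v)$, differentiating in $v$ and specialising to $v=1$ produces a dominant singular contribution of order $(1-t/\rho)^{-1/2}$, explicitly $-B(1)\,\rho'(1)/(2\rho)\cdot(1-t/\rho)^{-1/2}$. Applying Corollary~\ref{cor-fs} to numerator (exponent $-1/2$) and denominator (exponent $+1/2$) and forming the ratio yields
$$\mathbb E(X_{n})\sim -\frac{\rho'(1)}{\rho}\,n,$$
so the theorem reduces to the identity $-\rho'(1)/\rho=\sqrt{2}-1$, or equivalently $\rho'(1)=-\rho^{2}=-(3-2\sqrt{2})$.

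The main obstacle is this last identity: producing a clean implicit equation for the $v$-dependent singularity $\rho(v)$ and simplifying $\rho'(1)$. The singularity $\rho(v)$ is the branch point of $E(t,v)$ coming from the discriminant $\Delta(t,v)$ (in $u$) of the equation $K(t,u,v)=0$ that has $u_{1}(t,v)$ as a small root; by implicit differentiation $\rho'(1)=-\partial_{v}\Delta/\partial_{t}\Delta\bigr|_{(t,v)=(\rho,1)}$. Because $v$ enters only through the single entry $tuv$ of $A(t,u,v)$, the discriminant is small enough to be written down explicitly, and the algebraic reduction should collapse to the closed form $\rho'(1)=-(3-2\sqrt{2})$. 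Useful sanity checks are that $E(t,1)$ must reproduce OEIS~\oeis{A006318}, and that the first few coefficients of $T(t)/E(t,1)$ must match the ascent counts tabulated in OEIS~\oeis{A090981}; both are easy to verify once $E(t,v)$ is written down symbolically.
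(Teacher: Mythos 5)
Your proposal does not prove the stated corollary at all. Corollary~\ref{cor-fs} is a general statement of singularity analysis: for a $\Delta$-analytic $f$ with $f(z)\sim(1-z)^{-\alpha}$ as $z\to 1$ in the $\Delta$-domain and $\alpha\notin\mathbb{Z}_{\leq 0}$, one has $[z^n]f(z)\sim n^{\alpha-1}/\Gamma(\alpha)$. Its proof is a short combination of the two results stated just before it: write $f(z)=(1-z)^{-\alpha}+o\bigl((1-z)^{-\alpha}\bigr)$ in the intersection of the $\Delta$-domain with a neighborhood of $1$, extract $[z^n]$ of the main term via Theorem~\ref{thm-fs} to get $n^{\alpha-1}/\Gamma(\alpha)\,(1+O(1/n))$, and dispose of the remainder with the $o$-version of the transfer theorem (Theorem~\ref{transfer-thm} with $\beta=0$), which contributes only $o(n^{\alpha-1})$. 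The paper itself does not spell this out, deferring to \cite{FS-anacomb}, but that two-line argument is what is required.

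What you have written is instead a proof sketch of Theorem~\ref{schroeder-asc}, the asymptotics of the expected number of ascents in Schröder paths. That argument \emph{uses} Corollary~\ref{cor-fs} (you invoke it explicitly for the exponents $\pm 1/2$ when treating numerator and denominator), so it cannot serve as a proof of the corollary; the logical dependence runs the other way. As a side remark, even read as a proof of Theorem~\ref{schroeder-asc} your route differs from the paper's: the paper computes the closed form of $E(x,v)$, differentiates it explicitly at $v=1$, and extracts coefficients of the resulting algebraic functions term by term, whereas you propose to track the moving singularity $\rho(v)$ and compute $\rho'(1)$ by implicit differentiation of the discriminant. That is a legitimate alternative scheme, but it is the proof of a different statement; for the corollary actually posed you need to return to the transfer argument above.
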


\noindent
\textit{Proof of Theorem \ref{schroeder-asc}.}
The (contiguous) patterns $UD$ and $UF$ mark the end of an ascent. Thus, when counting ascents we want to count how many times these two patterns occur. Problems like this can also be dealt with the vectorial kernel method: Instead of not allowing a transition from one state to another which would complete the pattern, we mark such transitions with a new variable and then read off the corresponding coefficients in the generating function in order to obtain the number of walks where this pattern occurs $k$ times, since it is encoded by the $k$-th power of this new variable.

Our problem can be encoded by the following automaton:\\
\begin{center}
\includegraphics[height=3cm]{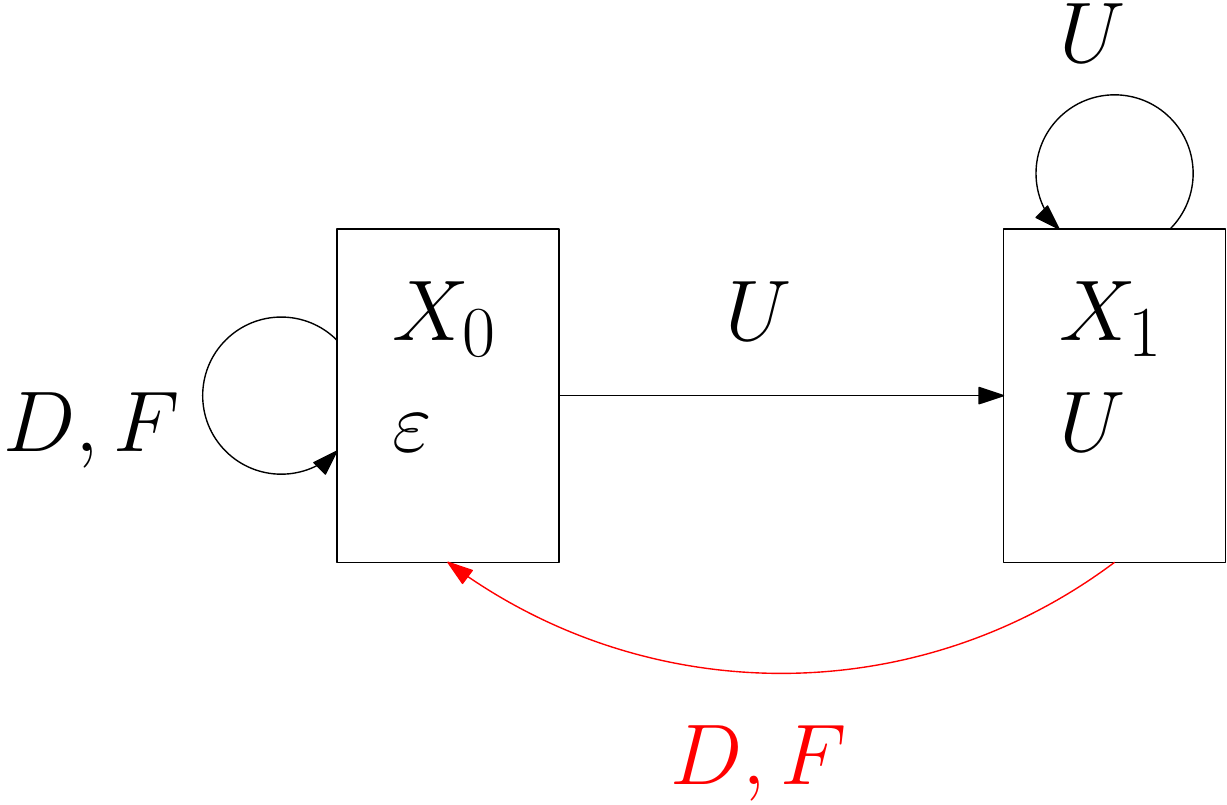}
\end{center}
The red arrow marks the ascents we want to count and will be marked by a new variable $v$ in the adjacency matrix. Its adjacency matrix is given by
$$
A=\begin{pmatrix}
tu^{-1}+t^2 & tu\\
(tu^{-1}+t^2){\red v} & tu\\
\end{pmatrix}$$
where $u$ encodes altitude, $t$ encodes length of the path, and $v$ counts the number of ascents. Thus we have that
$$I-A=\begin{pmatrix}
1-tu^{-1}-t^2 & -tu\\
-tu^{-1}v-t^2v & 1-tu\\
\end{pmatrix}.$$
The kernel is then given by
\begin{equation}
\label{kern-asc}
K(t,u)=\det(I-A)=u^{-1}((t^3-t^3v-t)u^2+(1-t^2v)u-t).
\end{equation}
Its zeroes are
$$u_{1,2}=\frac{1-t^2v\pm\sqrt{t^4(v-2)^2-2t^2(v+2)+1}}{2t(1+t^2(v-1))},$$
the one with minus being the small root. Hence, the number of small roots is $e=1$.

Writing $M_0=M_0(t,u,v)$ for the walks ending in state $X_0$ (i.e. in an $F$- or $D$-step) and $M_1=M_1(t,u)$ for the walks ending in state $X_1$ (i.e. in an $U$-step), we obtain the following vectorial functional equation
\begin{equation}
\label{feqn-asc}
(M_0,M_1)(I-A)=(1,0)-\{u^{<0}\}((M_0,M_1)A).
\end{equation}
We are interested in $M(t,0,v)=M_0(t,0,v)$, i.e. walks ending at altitude zero (since walks ending in state $X_1$ end in an up-step, they have final altitude at least 1, they will not contribute). In order to compute the forbidden vector $F=(1,0)-\{u^{<0}\}((M_0,M_1)A$ we compute
$$\{u^{<0}\}((M_0,M_1)A=(tu^{-1}M_0+t^2M_0+tu^{-1}vM_1+t^2vM_1, tu(M_0+M_1)).$$
Writing $m_0:=[u^0]M_0(t,u)$ and using $[u^0]M_1(t,u)=0$ we obtain
$$\{u^{<0}\}((M_0,M_1)A)=(tu^{-1}m_0,0)$$
and
$$F=(1-tu^{-1}m_0,0).$$

The adjoint of the adjacency matrix is given by
$$\text{adj}(I-A)=\begin{pmatrix}
1-tu & tu\\
t^2v+tu^{-1}v & -t^2-tu^{-1}+1\\
\end{pmatrix}$$
and thus the autocorrelation vector $\vec{v}$ is
$$\vec{v}=\text{adj}(I-A)\cdot\begin{pmatrix} 1\\ 1\end{pmatrix}= \begin{pmatrix} 1\\ t^2v+tu^{-1}v-t^2-tu^{-1}+1\end{pmatrix}.$$
We obtain that
$$\Phi(t,u)=u^{e}F\cdot\vec{v}=u-tm_0.$$
Using
 $$\Phi(t,u)=G(t,u)(u-u_1)$$
where $u_1$ is the small root of the kernel we obtain that $\deg_u G=0$ and by comparing coefficients we obtain that
$$G=1 \quad\text{and}\quad Gu_1=tm_0.$$
Thus we have
$$M(t,0,v)=E(t,v)=m_0=\frac{Gu_1}{t}=\frac{1-t^2v-\sqrt{t^4(v-2)^2-2t^2(v+2)+1}}{2t^2(1+t^2(v-1))}.$$
Transitioning to semilength $x:=t^2$ (and omitting the dependency on $u$) we obtain
$$E(x,v)=\frac{1-xv-\sqrt{1-2x(v+2)+x^2(v-2)^2}}{2x(1+x(v-1))}.$$
We are interested in the asymptotic behavior of
$$\mathbb{E}X_n=\frac{[x^n]\partial_v E(x,v)|_{v=1}}{[x^n]E(x,1)}.$$
We have
\begin{equation}
\label{mx1}
E(x,1)=\frac{1-x-\sqrt{1-6x+x^2}}{2x},
\end{equation}
which is the generating function of Schröder paths, and
\begin{equation}
\label{dvmx1}
\partial_v E(x,v)|_{v=1}=\frac{x^2-5x+2+(x+2)\sqrt{1-6x+x^2}}{2\sqrt{1-6x+x^2}}=\frac{x+2}{2}+\frac{x^2-5x+2}{2\sqrt{1-6x+x^2}}.
\end{equation}
By the rules for computing limits we have
$$\lim_{n\to\infty}\mathbb{E}X_n=\lim_{n\to\infty}\frac{[x^n]\partial_v E(x,v)|_{v=1}}{[x^n]E(x,1)}=\frac{\lim_{n\to\infty}[x^n]\partial_v E(x,v)|_{v=1}}{\lim_{n\to\infty}[x^n]E(x,1)}$$
thus it remains to compute the coefficient asymptotics for (\ref{mx1}) and (\ref{dvmx1}). 

First we are going to determine
$$[x^n]E(x,1)=[x^{n+1}]\frac{-\sqrt{1-6x+x^2}}{2}$$
for $n$ large. The discriminant $1-6x+x^2$ has the roots $x_{1,2}=3\pm\sqrt{8}$, where $\rho=3-\sqrt{8}$ is the dominant singularity and $3+\sqrt{8}$ lies outside every $\Delta$-domain around $\rho$. First, we want to move the dominant singularity to one in order to use the above theorems. This can be done via the substitution $z=\frac{x}{3-\sqrt{8}}$. We have that
\begin{align*}
\sqrt{1-6x+x^2} &=\sqrt{3-\sqrt{8}-x}\cdot \sqrt{3+\sqrt{8}-x}=\sqrt{3-\sqrt{8}}\sqrt{1-z}\cdot\sqrt{3+\sqrt{8}-(3-\sqrt{8})z}\\
			&\sim(3-\sqrt{8})^{1/2}(2\sqrt{8})^{1/2}\sqrt{1-z}
\end{align*}
locally for $z\to 1$. Thus, by Corollary \ref{cor-fs} with $\alpha=-\frac{1}{2}$ we have that
\begin{align}
[x^n]E(x,1)	&\sim [x^{n+1}]\frac{1}{2}(2\sqrt{8}(3-\sqrt{8}))^{1/2}\left(-\sqrt{1-\frac{x}{3-\sqrt{8}}}\right)\nonumber\\
			&= -\frac{1}{2} (2\sqrt{8}(3-\sqrt{8}))^{1/2} (3-\sqrt{8})^{-n-1}[z^{n+1}]\sqrt{1-z}\nonumber\\
			&= -\frac{1}{2} (2\sqrt{8})^{1/2} (3-\sqrt{8})^{-n-1/2}\frac{(n+1)^{-3/2}}{\Gamma(\frac{1}{2})}\nonumber\\
			&\sim \frac{1}{2} (3-\sqrt{8})^{-n-1/2} \frac{(2\sqrt{8})^{1/2}}{2\sqrt{\pi}}n^{-3/2}\label{xnm}
\end{align}
for $n\to\infty$.
In order to compute $[x^n]\partial_vE(x,v)|_{v=1}$ we first compute $[x^n](1-6x+x^2)^{-1/2}$ because this expression will appear in the computation of  $[x^n]\partial_vE(x,v)|_{v=1}$. By the substitution $z=\frac{x}{3-\sqrt{8}}$ and Corollary \ref{cor-fs} with $\alpha=\frac{1}{2}$ we obtain
\begin{align}
[x^n](1-6x+x^2)^{-1/2}	&= [x^n]((3-\sqrt{8})-x)^{-1/2}((3+\sqrt{8})-x)^{-1/2}\nonumber\\
					&= [z^n](3-\sqrt{8})^{-n-1/2}(1-z)^{-1/2}((3+\sqrt{8})-(3-\sqrt{8})z)^{-1/2}\nonumber\\
					&\sim (3-\sqrt{8})^{-n-1/2}(2\sqrt{8})^{-1/2}\frac{n^{-1/2}}{\sqrt{\pi}} \label{aux}
\end{align}
for $n\to \infty$.
For $n$ large we have that
\begin{align*}
[x^n]\partial_vE(x,v)|_{v=1} 	&=\frac{1}{2}[x^n](x^2-5x+2)(1-6x+x^2)^{-1/2}\\
						&= \frac{1}{2}[x^{n-2}](1-6x+x^2)^{-1/2}-\frac{5}{2}[x^{n-1}](1-6x+x^2)^{-1/2}+[x^n](1-6x+x^2)^{-1/2}.
\end{align*}
Using (\ref{aux}) and the fact that $(n-k)^{-1/2}\sim n^{-1/2}$ for $k$ constant and $n\to \infty$ we obtain after some simplifications that
\begin{equation}
\label{xndvmx1}
[x^n]\partial_vE(x,v)|_{v=1} \sim \frac{(2\sqrt{8})^{1/2}}{\sqrt{\pi}}n^{-1/2}(3-\sqrt{8})^{-n-1/2}(2-\sqrt{2})
\end{equation}
Using the expressions for (\ref{xnm}) and (\ref{xndvmx1}) we obtain that for $n\to\infty$ the expected value of ascents behaves like
$$\mathbb{E}X_n\sim\frac{(3-\sqrt{8})^{-n-1/2}(2-\sqrt{2})}{\sqrt{\pi}n^{1/2}(2\sqrt{8})^{1/2}}\cdot\frac{2\cdot2\sqrt{\pi}n^{3/2}}{(3-\sqrt{8})^{-n-1/2}(2\sqrt{8})^{1/2}}$$
which, after some simplifications becomes
\begin{equation}
\label{mu}
\mathbb{E}X_n\sim(\sqrt{2}-1)n.
\end{equation}
This proves Callans conjecture. \hfill $\Box$


\begin{thm}
Let $X_n$ be the random variable counting ascents in a Schröder path of length $n$ which is chosen uniformly at random among all Schröder paths of length $n$. Then
\begin{equation}
\label{sigma}
\mathbb{V}X_n\sim\frac{188-133\sqrt{2}}{8\sqrt{2}-12}n\approx 0.1317\, n
\end{equation}
for $n\to \infty$.
\end{thm}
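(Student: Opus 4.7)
The plan is to compute $\mathbb{V}X_n = \mathbb{E}(X_n^2) - (\mathbb{E}X_n)^2$ using the bivariate generating function $E(x,v)$ already derived in the proof of Theorem~\ref{schroeder-asc}. Writing $\mathbb{E}(X_n^2) = \mathbb{E}(X_n(X_n-1)) + \mathbb{E}X_n$ together with
$$\mathbb{E}(X_n(X_n-1)) = \frac{[x^n]\partial_v^2 E(x,v)|_{v=1}}{[x^n]E(x,1)},$$
the task reduces to extracting coefficient asymptotics of $\partial_v^2 E(x,v)|_{v=1}$ and combining them with the asymptotics for $\partial_v E(x,v)|_{v=1}$ and $E(x,1)$ already established in (\ref{xnm}) and (\ref{xndvmx1}).

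First I would differentiate $E(x,v)$ twice in $v$ and set $v=1$. With $Q(x,v) := 1-2x(v+2)+x^2(v-2)^2$ one has $Q(x,1) = 1-6x+x^2$; the factors $1-xv$ and $(1+x(v-1))^{-1}$ are smooth in $v$ at $v=1$, so the chain rule applied to $\sqrt{Q}$ shows that $\partial_v^2 E(x,v)|_{v=1}$ is a linear combination, with rational-in-$x$ coefficients, of the three basic functions
$$1,\qquad (1-6x+x^2)^{-1/2},\qquad (1-6x+x^2)^{-3/2}.$$

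Next, using the substitution $z = x/(3-\sqrt{8})$ as in the proof of Theorem~\ref{schroeder-asc} together with Corollary~\ref{cor-fs} applied with $\alpha = 3/2$, I obtain
$$[x^n](1-6x+x^2)^{-3/2} \sim \frac{(3-\sqrt{8})^{-n-3/2}}{(2\sqrt{8})^{3/2}\,\Gamma(3/2)}\, n^{1/2},$$
which dominates the $(1-6x+x^2)^{-1/2}$ contribution computed in (\ref{aux}). Dividing by (\ref{xnm}) then yields $\mathbb{E}(X_n(X_n-1)) = A\,n^2 + B\,n + O(n^{1/2})$ for explicit algebraic constants $A,B$. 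The leading coefficient $A$ must equal $(\sqrt{2}-1)^2$ for the variance to be subquadratic, which provides a built-in consistency check on the calculation.

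The main obstacle is the cancellation of the leading $n^2$ terms between $\mathbb{E}(X_n^2)$ and $(\mathbb{E}X_n)^2$: the variance lives in the subleading $\Theta(n)$ correction, so I must refine the first-moment expansion of Theorem~\ref{schroeder-asc} to a second-order asymptotic
$$\mathbb{E}X_n = (\sqrt{2}-1)\,n + \mu_0 + O(n^{-1})$$
by keeping the next term in Theorem~\ref{thm-fs} and in the Taylor expansion of $((3+\sqrt{8})-(3-\sqrt{8})z)^{\pm 1/2}$ at $z=1$. Squaring then gives $(\mathbb{E}X_n)^2 = (\sqrt{2}-1)^2 n^2 + 2(\sqrt{2}-1)\mu_0\,n + O(1)$, and assembling
$$\mathbb{V}X_n = \bigl(B + (\sqrt{2}-1) - 2(\sqrt{2}-1)\mu_0\bigr)\,n + O(n^{1/2})$$
reduces the proof to algebraically simplifying the bracketed coefficient (with a trivial factor-of-two rescaling if the statement is phrased in length rather than semilength). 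I expect that simplification to produce exactly $(188-133\sqrt{2})/(8\sqrt{2}-12)$, completing the proof.
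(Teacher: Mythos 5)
Your plan is correct and matches the paper's own proof: both compute the variance via the second factorial moment $[x^n]\partial_v^2 E(x,v)|_{v=1}\big/[x^n]E(x,1)$ combined with the first moment, apply singularity analysis at $x=3-\sqrt{8}$ with the rescaling $z=x/(3-\sqrt{8})$, and keep second-order terms in the expansions precisely because the leading $n^2$ contributions cancel. The paper carries out the same bookkeeping using the Flajolet--Sedgewick tables of standard-function asymptotics; your built-in consistency check that the $n^2$ coefficient must equal $(\sqrt{2}-1)^2$ and your remark about the length-versus-semilength normalization are both apt.
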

\begin{proof}
The variance can be computed using similar means as the expected value. We have that
\begin{equation}
\label{var}
\mathbb{V}(X_n) =\frac{[x^n]\partial_v^2E(x,v)|_{v=1}}{[x^n]E(x,1)}+\frac{[x^n]\partial_vE(x,v)|_{v=1}}{[x^n]A(x,1)}-\left(\frac{[x^n]\partial_vE(x,v)|_{v=1}}{[x^n]A(x,1)}\right)^2.
\end{equation}
The second derivative of $E$ with respect to $v$ is given by
$$\partial_v^2E(x,v)|_{v=1}=(-x^5+11x^4-33x^2+21x^2+2x)(x^2-6x+1)^{-3/2}-\frac{x^4-8x^3+13x^2-2x}{x^2-6x+1}.$$

Using the substitution $z=\frac{x}{3-\sqrt{8}}$ and the tables for the asymptotics of standard functions from \cite{FS-anacomb}, p. 388 we see that
\begin{align*}
[z^n](1-z)^{1/2} &\sim -\frac{1}{\sqrt{\pi n^3}}\left(\frac{1}{2}+\frac{3}{16n}+\frac{25}{256n^2}+\mathcal{O}\left(\frac{1}{n^3}\right)\right),\\
[z^n](1-z)^{-1/2} &\sim \frac{1}{\sqrt{\pi n}}\left(1-\frac{1}{8n}+\frac{1}{128n^2}+\mathcal{O}\left(\frac{1}{n^3}\right)\right),\\
[z^n](1-n)^{-1} &\sim 1\\
[z^n](1-z) &\sim \sqrt{\frac{n}{\pi}}\left(2+\frac{3}{4n}-\frac{7}{64n^2}\mathcal{O}\left(\frac{1}{n^3}\right)\right)
\end{align*}
(we need the additional terms because there will be a cancellation of the leading terms of order $n^2$, just the previously computed terms will not do the trick).

Plugging these as well as the correct asymptotic growth rates in the formula for the variance (\ref{var}) we obtain the claim of the theorem after some cancellations and computing limits.
\end{proof}

With the help of the Drmota-Lalley-Woods theorem we can obtain even more information about the limiting distribution of the number of ascents.

\begin{thm}[Drmota-Lalley-Woods theorem, limiting distribution version from \cite{BD}] Suppose that $\mathbf{y}=\mathbf{P}(z,\mathbf{y},u)$ is a strongly connected and analytically well defined entire or polynomial system of equations that depends on $u$ and has a solution $\mathbf{f}$ that exists in a neighborhood of $u=1$. Furthermore, let $h(z,u)$ be given by
$$h(z,u)=\sum_{n\geq 0} h_n(u)z^n = H(z,\mathbf{f}(z,u), u),$$
where $H(z,y,u)$ is entire or a polynomial function with non-negative coefficients that depends on $\mathbf{y}$ and suppose that $h_n(u)\not=0$ for all $n\geq n_0$ (for some $n_0\geq 0$).

Let $X_n$ be a random variable whose distribution is defined by
$$\mathbb{E}[\left[u^{X_n}\right]=\frac{h_n(u)}{h_n(1)}.$$
Then $X_n$ has a Gaussian limiting distribution. More precisely, we have $\mathbb{E}[X_n]=\mu n+O(1)$ and $\mathbb{V}[X_n]=\sigma^2 n+O(1)$ for constants $\mu>0$ and $\sigma^2\geq 0$ and 
$$\frac{1}{\sqrt{n}}(X_n-\mathbb{E}[X_n])\to N(0,\sigma^2).$$
\end{thm}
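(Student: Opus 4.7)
The plan is to combine singularity analysis of the implicit system $\mathbf{y}=\mathbf{P}(z,\mathbf{y},u)$ with Hwang's quasi-powers theorem to extract the Gaussian limit. The starting point is the case $u=1$: strong connectivity together with the analytically well-defined hypotheses is exactly what Drmota's classical result needs to conclude that the solution vector $\mathbf{f}(z,1)$ has a common dominant singularity at some $z=\rho>0$ of square-root type. That is, near $z=\rho$ each component admits an expansion $f_i(z,1)=\alpha_i(z)-\beta_i(z)\sqrt{1-z/\rho}$ with $\alpha_i,\beta_i$ analytic at $\rho$ and $\beta_i(\rho)>0$. The branch point is located by the vanishing of $\det(I-D_{\mathbf{y}}\mathbf{P})$ along the positive graph of the system; the implicit function theorem together with a Perron--Frobenius argument on the nonnegative Jacobian $D_{\mathbf{y}}\mathbf{P}$ guarantees that the branch is simple and that $\mathbf{f}(z,1)$ is $\Delta$-analytic.

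Next, I would perturb in $u$. Because $\mathbf{P}$ is entire or polynomial with nonnegative coefficients and $\mathbf{f}$ is assumed to exist in a neighborhood of $u=1$, the characteristic system depends analytically on $u$ there, so the implicit function theorem yields a function $\rho(u)$, analytic at $u=1$ with $\rho(1)=\rho$, such that locally
$$\mathbf{f}(z,u)=\mathbf{a}(z,u)-\mathbf{b}(z,u)\sqrt{1-z/\rho(u)}.$$
Plugging this into $h(z,u)=H(z,\mathbf{f}(z,u),u)$ and using that $H$ is entire or polynomial in $\mathbf{y}$ with nonnegative coefficients, one inherits the same singularity structure for $h$, namely $h(z,u)=\alpha(z,u)-\beta(z,u)\sqrt{1-z/\rho(u)}$, uniformly for $u$ in a neighborhood of $1$, with $\beta(\rho(1),1)>0$ (using $h_n(1)\neq 0$ eventually to exclude degeneration).

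With this local structure in hand, the transfer theorem (Theorem~\ref{transfer-thm}) and Corollary~\ref{cor-fs} give
$$h_n(u)=\beta(\rho(u),u)\,\frac{\rho(u)^{-n}\,n^{-3/2}}{\Gamma(-1/2)}\bigl(1+O(1/n)\bigr)$$
uniformly for $u$ near $1$, so that $h_n(u)/h_n(1)=A(u)B(u)^n\bigl(1+O(1/n)\bigr)$ with $A,B$ analytic at $u=1$, $B(1)=1$ and $B(u)=\rho(1)/\rho(u)$. This is exactly the hypothesis of Hwang's quasi-powers theorem, whose conclusion is the Gaussian limit $\frac{1}{\sqrt{n}}(X_n-\mathbb{E}[X_n])\to N(0,\sigma^2)$ together with $\mathbb{E}[X_n]=\mu n+O(1)$ and $\mathbb{V}[X_n]=\sigma^2 n+O(1)$, where $\mu=-\rho'(1)/\rho(1)$ and $\sigma^2$ is the usual expression in $\rho'(1)$ and $\rho''(1)$. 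The main obstacle is the first step: establishing that in the strongly connected, analytically well-defined setting the dominant singularity really is a square-root branch and that $\rho(u)$ extends analytically to a neighborhood of $u=1$. This requires Perron--Frobenius to rule out Jacobian degeneracies, aperiodicity of the dependency graph (hidden in the ``analytically well defined'' hypothesis) to ensure a unique dominant singularity on the circle $|z|=\rho(u)$, and a careful verification that the $\Delta$-analytic continuation remains valid uniformly in $u$.
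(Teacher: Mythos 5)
The paper does not actually prove this theorem: its entire ``proof'' is the line ``See \cite{BD} or \cite{Drmota}.'', so there is no argument of the author's to compare yours against --- the result is imported verbatim from Banderier--Drmota. That said, your outline is precisely the standard proof from those references: Drmota's theorem on positive, strongly connected systems yields the square-root singularity of $\mathbf{f}(z,1)$ via the characteristic system $\det(I-D_{\mathbf{y}}\mathbf{P})=0$ and a Perron--Frobenius argument on the nonnegative Jacobian; perturbing in $u$ gives an analytic singularity curve $\rho(u)$; composing with the nonnegative function $H$ preserves the singularity type; and Hwang's quasi-powers theorem converts the uniform estimate $h_n(u)/h_n(1)=A(u)B(u)^n\bigl(1+O(1/n)\bigr)$ into the Gaussian limit with $\mu=-\rho'(1)/\rho(1)$ (and $\sigma^2\ge 0$ possibly degenerate). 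So the route is correct, but be aware that what you have written is a road map rather than a proof: the genuinely hard steps --- that the dominant singularity of a strongly connected, analytically well-defined system really is a simple square-root branch point, that $\rho(u)$ and the $\Delta$-analyticity persist \emph{uniformly} for $u$ in a complex neighborhood of $1$, and that periodicity (which is exactly what the hypothesis $h_n(u)\neq 0$ for $n\geq n_0$ guards against, rather than something ``hidden in the analytically well defined hypothesis'') does not produce several conjugate dominant singularities --- are asserted, not established. Filling them in amounts to reproving Drmota's theorem, which is presumably why the paper simply cites it; if you intend your text as a self-contained proof it is incomplete, but as a reconstruction of the cited argument it is faithful.
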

\noindent
\begin{proof}
See \cite{BD} or \cite{Drmota}.
\end{proof}

\begin{cor}
The number of ascents in Schröder paths has a Gaussian limiting distribution with parameters $\mu=\sqrt{2}-1$ and $\sigma^2=\frac{188-133\sqrt{2}}{8\sqrt{2}-12}$.
\end{cor}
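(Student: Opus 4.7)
The plan is to apply the Drmota--Lalley--Woods theorem to the bivariate generating function $E(x,v)$ obtained in the previous subsection, and then read off the constants from the asymptotics already computed. Concretely, I would first recast the automaton-based derivation of $E(x,v)$ as a strongly connected positive polynomial system $\mathbf{y}=\mathbf{P}(z,\mathbf{y},u)$. The most transparent way is to give a context-free grammar for Schröder excursions that marks every ascent with $u=v$: for example, decompose an excursion as either empty, or $F$ followed by an excursion, or a nontrivial arch $U \cdot (\text{excursion}) \cdot D \cdot (\text{excursion})$, introducing an auxiliary symbol that remembers whether the previous step was $U$ so that the $v$ weight correctly counts maximal runs of up-steps rather than individual up-steps. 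This produces a small polynomial system with non-negative coefficients, and strong connectivity is visible from the automaton (states $X_0$ and $X_1$ each reach the other, and every nonterminal eventually produces the excursion symbol).

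Next, I would verify the analytic hypotheses of the theorem. For $u=1$ the system reduces to the classical Schröder grammar, whose fixed-point equation has the unique combinatorial branch $E(x,1)=\frac{1-x-\sqrt{1-6x+x^2}}{2x}$ with square-root singularity at $\rho=3-\sqrt{8}$; by the implicit-function theorem together with continuity of the relevant Jacobian, the solution extends analytically for $u$ in a neighborhood of $1$, giving an analytic family of dominant singularities $\rho(u)$ with $\rho(1)=3-\sqrt{8}$. This is precisely the setting in which the limiting-distribution version of Drmota--Lalley--Woods applies, yielding constants $\mu>0$ and $\sigma^2\ge 0$ with $\mathbb{E}[X_n]=\mu n+O(1)$, $\mathbb{V}[X_n]=\sigma^2 n+O(1)$, and $(X_n-\mathbb{E}[X_n])/\sqrt{n}\to \mathcal{N}(0,\sigma^2)$.

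Finally, the identification of $\mu$ and $\sigma^2$ is forced by the asymptotics already in hand: Theorem~\ref{schroeder-asc} gives $\mathbb{E}[X_n]\sim(\sqrt{2}-1)n$, hence $\mu=\sqrt{2}-1$, and the variance theorem gives $\mathbb{V}[X_n]\sim\frac{188-133\sqrt{2}}{8\sqrt{2}-12}n$, hence $\sigma^2=\frac{188-133\sqrt{2}}{8\sqrt{2}-12}$.

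The main obstacle I expect is not the Gaussian limit itself but producing a visibly \emph{polynomial and strongly connected} system from a derivation that so far yielded only the closed form with a square root. I would address this by giving the grammar above explicitly and checking positivity, strong connectivity of the dependency graph, and the aperiodicity/nondegeneracy conditions (in particular $\sigma^2>0$, which follows since the variance asymptotics have a strictly positive leading coefficient, ruling out the degenerate case of the theorem).
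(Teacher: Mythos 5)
Your proposal is correct in substance and follows the same overall strategy as the paper: invoke the Drmota--Lalley--Woods theorem and then identify $\mu$ and $\sigma^2$ from the asymptotics of $\mathbb{E}[X_n]$ and $\mathbb{V}[X_n]$ already established. The difference lies entirely in how the required system $\mathbf{y}=\mathbf{P}(z,\mathbf{y},u)$ is produced. You treat this as the main obstacle and propose building a context-free grammar from the automaton, with an auxiliary nonterminal tracking whether the previous step was $U$ so that maximal runs (rather than individual up-steps) are marked; you leave that grammar unconstructed. The paper short-circuits this entirely: it simply exhibits the single quadratic equation $y=P(z,y,u)$ with $P(z,y,u)=z(1+z(u-1))y^2+zuy+1$, checks that its power-series solution is exactly the $E(x,v)$ already derived by the kernel method, and observes that a system consisting of one equation in one unknown is trivially strongly connected, with $H(z,y,u)=y$ and $h_n(u)\neq 0$ clear from the combinatorics. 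So your ``main obstacle'' has a one-line resolution. That said, your instinct is not misplaced: the paper's $P$ is not coefficient-positive (expanding gives a $-z^2y^2$ term), so it relies on the equation being ``analytically well defined'' rather than on manifest positivity; a genuinely positive grammar of the kind you sketch would make the hypotheses more transparently satisfied, at the cost of a larger system whose strong connectivity must then actually be verified. Your final step --- reading off $\mu=\sqrt{2}-1$ and $\sigma^2=\frac{188-133\sqrt{2}}{8\sqrt{2}-12}$ from the earlier theorems, and noting $\sigma^2>0$ rules out degeneracy --- matches the paper exactly.
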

\noindent
\textit{Proof.} Let 
$$P(z,y,u)=z(1+z(u-1))y^2+zuy+1.$$
Solving the system $y=P(z,y,u)$ gives us
$$f(z,u)=\frac{1-zu-\sqrt{1-2z(u+2)+z^2(u-2)^2}}{2z(1-z(u-1))}$$
which is a formal power series in a neighborhood of $u=1$ (the other solution with plus is not and can be disregarded). The function $f$ coincides with $E(x,v)$ (after a substitution $z=x$ and $u=v$). The system is strongly connected since it consists of only one equation in one unknown. Let $H(z,y,u)=y$ such that $H(z,f,u)=f(z,u)$. From the combinatorial interpretation we see that $h_n(u)\not=0$ for $n\geq n_0$ (remember, $h_n(u)$ counts ascents in Schröder paths of length $n$, thus being a power series of the form $1+c_1u+O(u^2)$ for any $n>0$, the 1 comes from the Schröder path consisting only of flat steps, thus having no ascent). The random variable $X_n$ counting ascents has distribution defined by
$$\mathbb{E}\left[u^{X_n}\right]=\frac{h_n(u)}{h_n(1)}.$$
Thus, we can apply the Drmota-Lalley-Woods theorem and obtain that $X_n$ has Gaussian limiting distribution. We already computed the constants $\mu=\sqrt{2}-1$ and $\sigma^2=\frac{188-133\sqrt{2}}{8\sqrt{2}-12}$ earlier in Equations \ref{mu} and \ref{sigma}. \hspace{\fill}$\Box$

\section{Conclusion}

The vectorial kernel method is a powerful tool, unifying various results on the enumeration of lattice paths which avoid a given pattern. In this paper the vectorial kernel method was generalized to lattice paths with longer steps and used to prove a conjecture on the asymptotic behavior of the expected number of ascents in Schröder paths. The results from this paper also allow to tackle other parameters (e.g. humps, peaks or plateaus) of paths with longer steps obeying some constraints that can be described by a finite automaton which might become a subject of further studies.

\end{document}